\theoremstyle{plain}
\newtheorem{theorem}{Theorem}
\newtheorem{lemma}[theorem]{Lemma}
\newtheorem{corollary}[theorem]{Corollary}
\newcommand{\Db}{{\rm D}^b}
\newcommand{\Kb}{{\rm K}^b}
\newcommand{\Cb}{{\rm C}^b}
\newcommand{\Kbp}{{\rm K}^{b,p}}
\newcommand{\GL}{\mathsf{GL}}
\renewcommand{\Im}{\mathrm{Im\,}}
\newcommand{\kk}{\mathbf{k}}
\newcommand{\bA}{\mathbf{A}}
\newcommand{\bB}{\mathbf{B}}
\DeclareMathOperator{\Hom}{\mathrm{Hom}}
\DeclareMathOperator{\End}{\mathrm{End}}
\DeclareMathOperator{\Aut}{\mathrm{Aut}}
\DeclareMathOperator{\Out}{\mathrm{Out}}
\DeclareMathOperator{\Inn}{\mathrm{Inn}}
\DeclareMathOperator{\Pic}{\mathrm{Pic}}
\DeclareMathOperator{\TrPic}{\mathrm{TrPic}}
\DeclareMathOperator{\Coker}{\mathrm{Coker}}
\DeclareMathOperator{\add}{\mathrm{add}}
\DeclareMathOperator{\stilt}{\mathrm{silt}}
\DeclareMathOperator{\Filt}{\mathrm{Filt}}
\DeclareMathOperator{\Id}{\mathrm{Id}}
\title[Quasi-hereditary algebras with two simple modules]{On the derived category of quasi-hereditary algebras with two simple modules}
\author{Y. Volkov}
\begin{document}

\begin{abstract}
We describe the derived Picard groups and two-term silting complexes for quasi-hereditary algebras with two simple modules. We also describe by quivers with relations all algebras derived equivalent to a quasi-hereditary algebra with two simple modules.
\end{abstract}

\maketitle



\section{Introduction}

Derived categories play an important role in many branches of mathematics. Due to \cite{Ric2} if two derived categories of algebras over a field are equivalent, then there is an equivalence induced by a tensor product with a tilting complex of bimodules.
Such equivalences are closed under composition, and thus autoequivalences induced by tilting complexes of bimodules form a subgroup of the group of derived autoequivalences. This group is called {\it the derived Picard group} of an algebra and was first introduced in \cite{RZ} and \cite{Ye1}. Later it was shown in \cite{Ye2} that this group is locally algebraic. It was studied for many classes of algebras. For example, it was described for hereditary algebras in \cite{MY}, for selfinjective Nakayama algebras in \cite{VZ1,VZ2,Zvo} and for preprojective algebras of Dynkin quivers in \cite{AM,Miz}. The question if this group coincides with th group of all autoequivalences or not is still open. 

As we have already mentioned, equivalences of derived categories of algebras are strongly connected with tilting complexes. A generalization of a tilting complex is a silting complex that in some sense gives an equivalence between the derived category of the original algebra and the derived category of some DG algebra.
It was shown recently (see \cite{AIR,Asa,DIJ,IJY}) that two-term silting complexes constitute a very interesting and deep object related to other important notions such as $\tau$-tilting modules and bricks. These structures were studied, for example, for Nakayama algebras in \cite{Ada}, for the Auslander algebra of $\kk[x]/(x^n)$ in \cite{IZ}, for Brauer graph algebras in \cite{AAC} and for preprojective algebras associated with symmetrizable Cartan matrices in \cite{FG}.

Quasi-hereditary algebras are finite dimensional algebras introduced in \cite{CPS2,Sco}. Examples of these algebras arise in the representation theory of groups and Lie algebras. Also they include all finite dimensional algebras of global dimension $2$ (see \cite{DR}).

This paper is devoted to quasi-hereditary algebras with two simple modules. Such algebras were classified in \cite{MH}. Moreover, it was shown in \cite{DUB} that these algebras are exactly finite dimensional algebras of global dimension $1$ and $2$ having two simple modules.

Let us recall some results of \cite{DUB}. The author showed how one can obtain all tilting modules over a finite dimensional algebra of global dimension $2$ with two simple modules. It was also proved that any algebra whose derived category is generated by an exceptional pair is derived equivalent to one of the algebras considered in this paper. Moreover, all algebras derived equivalent to an algebra  of global dimension $2$ with two simple modules were described as endomorphism algebras of some tilting modules described there. All of these algebras are shown to be of global dimension $3$.
We will continue the study initiated in \cite{DUB}.

Using results of \cite{DUB} we will describe the derived Picard groups of all quasi-hereditary algebras with two simple modules. We will also describe all two-term silting complexes over these algebras and using this we will get the description of all algebras derived equivalent to quasi-hereditary algebras with two simple modules.
Our description will be more concrete than the description from \cite{DUB}; namely, we will describe all algebras from our classification by quivers with relations. Note also that due to results of \cite{LY} we obtain a description of all algebras with two simple modules that are not derived simple.

\section{Silting and tilting complexes}

We fix some algebraically closed field $\kk$. All considered vector spaces and algebras are over $\kk$. Whenever we say a module in this paper we mean a right module. For an algebra $\Lambda$, we denote by $\Cb_{\Lambda}$, $\Kb_{\Lambda}$, $\Kbp_{\Lambda}$ and $\Db_{\Lambda}$ the category of bounded complexes of finitely generated $\Lambda$-modules, the bounded homotopy category of finitely generated $\Lambda$-modules, the  bounded homotopy category of  finitely generated projective $\Lambda$-modules and the bounded derived category of  finitely generated $\Lambda$-modules respectively. We denote by $J_\Lambda$ the Jacobson radical of $\Lambda$. All complexes in this paper are equipped with a differential of degree one and for $X\in \Cb_{\Lambda}$ the complex $X[r]$ ($r\in\mathbb{Z}$) has terms $X[r]_i=X_{i+r}$ and differential $(-1)^rd_X$, where $d_X$ is the differential of $X$.

Let us recall that $X\in\Kbp_{\Lambda}$ is called {\it presilting complex} if $\Hom_{\Kb_{\Lambda}}(X,X[i])=0$ for $i>0$. If the same property is satisfied for all $i\not=0$, then $X$ is called {\it pretilting}.
If $X$ is presilting (pretilting) and additionally the smallest full triangulated subcategory of $\Kbp_{\Lambda}$ which contains $X$ and is closed under direct summands coincides with $\Kbp_{\Lambda}$, then $X$ is called {\it silting (tilting)}. The complex $X$ is called {\it basic} if there is no complex $Y$ not isomorphic to zero in $\Kbp_{\Lambda}$ and complex $X'$ such that $X\cong Y^2\oplus X'$ in $\Kbp_{\Lambda}$.

It was proved in \cite{Ric} that if the algebras $\Lambda$ and $\Gamma$ are derived equivalent, then there exists a tilting complex $X\in\Kbp_{\Lambda}$ such that $\End_{\Kb_{\Lambda}}(X)$ is isomorphic to $\Gamma$ as a $\kk$-algebra.
In the same paper it is explained how to construct an equivalence from $\Db_{\Gamma}$ to $\Db_{\Lambda}$ sending $\Gamma$ to $X$ using the tilting complex $X$ and an algebra isomorphism $\Gamma\cong\End_{\Kb_{\Lambda}}(X)$.
One can look also in \cite{VZ1, VZ2} how to construct an equivalence from $\Kbp_{\Gamma}$ to $\Kbp_{\Lambda}$ using the same data. In the current paper we will use the fact that if $U=(U_i\rightarrow\dots\rightarrow U_j)$ is an object of $\Kbp_{\Gamma}$, then the corresponding equivalence sends $U$ to a totalization of a bicomplex whose $k$-th column is the images of $U_k$ under this equivalence, while the image of $U_k$ can be calculated using the fact that $U_k$ is a direct sum of direct summands of $\Gamma$.

Equivalences that can be constructed using the just mentioned algorithm are called standard equivalences. Standard equivalences from $\Kbp_{\Lambda}$ to itself considered modulo natural isomorphisms constitute a group under composition which is called the {\it derived Picard group} of $\Lambda$ and is denoted by $\TrPic(\Lambda)$ (see \cite{VZ1}). This group was first introduced in \cite{RZ,Ye1} as a group of isomorphism classes of two-sided tilting complexes of $\Lambda$-bimodules under the operation of derived tensor product.

Let us recall also  some basic facts about two-term silting complexes. Whenever we say two-term complex in this paper, we mean a complex concentrated in degrees $-1$ and $0$. Since we need to deal only with finite dimensional algebras having two simple modules, we restrict our consideration to this type of algebras.
Then any basic two-term silting complex has two indecomposable presilting direct summands. Let $X=X_0\oplus X_1$ be a two-term silting complex, where $X_0$ and $X_1$ are indecomposable direct summands. Then, for $i=0,1$ one can define complexes $\mu^{\pm}_{X_i}(X_{1-i})$ in the following way. Let $f_1,\dots,f_n\in\Hom_{\Kb_{\Lambda}}(X_{1-i},X_i)$ be such that their classes constitute a basis of
$$\Hom_{\Kb_{\Lambda}}(X_{1-i},X_i)/\big(J_{\End_{\Kb_{\Lambda}(X_{i})}}\Hom_{\Kb_{\Lambda}}(X_{1-i},X_i)\big).$$
and $g_1,\dots,g_m\in\Hom_{\Kb_{\Lambda}}(X_{i},X_{1-i})$ be such that their classes constitute a basis of
$$\Hom_{\Kb_{\Lambda}}(X_{i},X_{1-i})/\big(\Hom_{\Kb_{\Lambda}}(X_{i},X_{1-i})J_{\End_{\Kb_{\Lambda}(X_{i})}}\big).$$
Then we define $\mu^+_{X_i}(X_{1-i})$ and $\mu^-_{X_{i}}(X_{1-i})$ by the triangles
$$
X_{1-i}\xrightarrow{\tiny\begin{pmatrix}f_1\\\vdots\\f_n\end{pmatrix}}X_i^n\rightarrow \mu^+_{X_i}(X_{1-i})\mbox{ and }\mu^-_{X_{i}}(X_{1-i})\rightarrow X_{i}^m\xrightarrow{\tiny\begin{pmatrix}g_1&\cdots&g_m\end{pmatrix}}X_{1-i}.
$$
Thus, one can note that $\mu^+_{X_i}(X_{1-i})$ is a cone of {\it a minimal right approximation} and $\mu^-_{X_i}(X_{1-i})$  a cocone of {\it a minimal left approximation} of $X_{1-i}$ with respect to $X_i$.
Then $X_i\oplus\mu^+_{X_i}(X_{1-i})$ and $\mu^-_{X_{1-i}}(X_i)\oplus X_{1-i}$ are silting complexes. It follows from the results of \cite{AIR} that if $X\not=\Lambda,\Lambda[1]$, then  for exactly one $i\in\{0,1\}$ both of the obtained complexes can be represented by a two-term complex in $\Kbp_\Lambda$.
We choose such $i$ and introduce two-term silting complexes $\mu^+(X)=X_i\oplus\mu^+_{X_i}(X_{1-i})$ and $\mu^-(X)=\mu^-_{X_{1-i}}(X_i)\oplus X_{1-i}$. We will call $\mu^+(X)$ and $\mu^-(X)$ {\it mutations} of the complex $X$. Suppose that $\Lambda=P_1\oplus P_2$, where $P_1$ and $P_2$ are indecomposable projective $\Lambda$-modules. The mutations of $\Lambda$ are by definition the complexes $P_1\oplus\mu_{P_1}^+(P_2)$ and $\mu^+_{P_1}(P_2)\oplus P_2$ and we denote both of them by $\mu^+(\Lambda)$. Analogously, the mutations of $\Lambda[1]$ are $P_1\oplus\mu_{P_1}^-(P_2)$ and $\mu^-_{P_1}(P_2)\oplus P_2$ and we denote both of them by $\mu^-(\Lambda[1])$. For any two-term silting complex $X$, one has $\mu^+\mu^-(X)=\mu^-\mu^+(X)=X$ whenever everything is well defined after an appropriate choice of $\mu^-$ and $\mu^+$ if this is needed.

Any two-term silting complex $X$ is a sum of two indecomposable two-term presilting complexes and each of these complexes has the form $X_i=P_1^{a_i}\oplus P_2^{b_i}\rightarrow P_1^{c_i}\oplus P_2^{d_i}$ ($i=0,1$), where either $a_0=a_1=0$ or $c_0=c_1=0$ and either $b_0=b_1=0$ or $d_0=d_1=0$.
We will call the vector $(c_i-a_i,d_i-b_i)$ the {\it $g$-vector} of $X_i$ and will denote it by $g_{X_i}$.
Then we get the point $O_{X_i}=\frac{g_{X_i}}{|g_{X_i}|}$ on the unit circle.
The points $O_{X_0}$ and $O_{X_1}$ divide the unit circle into two arcs. We associate with $X$ the shorter of these arcs and denote it by $L_X$.
Due to \cite{AIR} there are two-term silting complexes $Y^0$ and $Y^1$ that can be obtained via mutation process from $X$ such that $L_{Y^i}\cap L_X=O_{X_i}$ for $i=0,1$.
Moreover, it is shown in \cite{DIJ} that for a  two-term silting complex $Y\not=X,Y^0,Y^1$ one has $L_X\cap L_Y=\varnothing$.

\section{Quasi-hereditary algebras with two simple modules}

Let us consider the quiver $Q^{p,q}$ ($p,q>0$) with two vertices $1$ and $2$, $p$ arrows $\alpha_1,\dots,\alpha_p$ from the vertex $1$ to the vertex $2$ and $q$ arrows $\beta_1,\dots,\beta_q$ from the vertex $2$ to the vertex $1$. Let $I^{p,q}$ be the ideal of $\kk Q^{p,q}$ generated by the paths $\beta_j\alpha_i$ ($1\le i\le p$, $1\le j\le q$).
\begin{center}

\begin{tikzpicture}[->,>=stealth',shorten >=0.05cm,auto,node distance=1cm,
                    thick,main node/.style={rectangle,draw,fill=white!10,rounded corners=1.5ex,font=\sffamily \scriptsize \bfseries },rigid node/.style={rectangle,draw,fill=black!20,rounded corners=1.5ex,font=\sffamily \scriptsize \bfseries },style={draw,font=\sffamily \scriptsize \bfseries }]

\node[main node] (1) {$1$};
\node[] (1r) [right   of=1]{};
\node[] (1rr) [right   of=1r]{};
\node[] (1rrr) [right   of=1rr]{};
\node[main node] (2) [right   of=1rrr]{$2$};
 
\path[every node/.style={font=\sffamily\small}]
(1) edge  [bend right=-50] node[above=-6, fill=white]{\tiny  $\alpha_1$  } (2)
(1) edge  [bend right=-15] node[above=-6, fill=white]{\tiny  $\alpha_p$  } node[above=2]{\tiny\bf  $\vdots$  }(2)
(2) edge  [bend right=-15] node[above=-6, fill=white]{\tiny  $\beta_1$  } (1) 
(2) edge  [bend right=-50] node[above=-6, fill=white]{\tiny  $\beta_q$  } node[above=2]{\tiny\bf  $\vdots$  } (1)
;
\end{tikzpicture}
\end{center}
Our main object of interest are the algebras $\Lambda^{p,q}=\kk Q^{p,q}/I^{p,q}$. It is proved in \cite{MH} that all the algebras $\Lambda^{p,q}$ are quasi-hereditary and that any basic quasi-hereditary algebra with two simple modules over an algebraically closed field is isomorphic to $\Lambda^{p,q}$ for some integers $p$ and $q$. The algebra $\Lambda^{0,0}$ is semisimple and does not deserve our attention.  The algebra $\Lambda^{p,0}$ is the $p$-Kronecker algebra. Its derived Picard group was calculated in \cite[Theorem 4.3]{MY}. Since the $p$-Kronecker algebra is hereditary, all indecomposable complexes over it have at most two nonzero terms (see \cite{Hap}) and one can show that all tilting complexes have one or two nonzero terms too.
By these reasons, we will consider only the case $p,q>0$ in the next section. Note that, under this assumption, all the algebras $\Lambda^{p,q}$ are pairwise nonisomorphic.
We will describe the derived Picard group of the algebra $\Lambda^{p,q}$. Our description is based on the fact that any tilting complex can be sent to a tilting complex of length not greater than $2$ via the Serre functor on the category $\Kb_{\Lambda^{p,q}}$.
We also classify two-term silting complexes over $\Lambda^{p,q}$ and describe all the algebras derived equivalent to $\Lambda^{p,q}$.

As a quasi-hereditary algebra, $\Lambda^{p,q}$ has a sequence of standard objects and a sequence of costandard objects. One can easily check that the sequence of standard objects for $\Lambda^{p,q}$ is $(S_2,P_1)$ and the sequence of costandard objects is $(I_1,S_2)$, where we denote by $S_i$, $P_i$, and $I_i$ the simple, the projective and the injective modules corresponding to the vertex $i$ respectively. We will use the same notation for modules over the algebra $\Lambda^{p,q}$ and  for modules over the algebra $\Lambda^{q,p}$. Which category is meant will be each time clear from the context.

\section{Derived Picard group}

In this section we describe the derived Picard group of $\Lambda^{p,q}$. Let us first recall the definition of Ringel duality, specialize it to $\Lambda^{p,q}$ and give a consequence of the results of \cite{DUB} that will play a crucial role in the current paper.

For a quasi-hereditary algebra $\Lambda$, there exists a  quasi-hereditary algebra $\Gamma$ with a tilting $\Gamma$-module $T$ such that $\End_{\Gamma}(T)\cong\Lambda$ and $\Filt(\Delta)\cap \Filt(\nabla)=\add T$, where $\Filt(\Delta)$ is the subcategory of $\Gamma$-modules admitting a filtration by standard modules and $\Filt(\nabla)$ is  the subcategory of $\Gamma$-modules admitting a filtration by costandard modules (see \cite{Rin}). This tilting module gives the Ringel duality functor $\omega:\Db_\Lambda\rightarrow \Db_{\Gamma}$ that sends standard objects of $\Lambda$ to costandard objects of $\Gamma$.
The algebra $\Gamma$ is called {\it the Ringel dual} of $\Lambda$.
If $\Lambda=\Lambda^{p,q}$, then a direct calculation shows that $\Lambda^{\rm op}=\Lambda^{q,p}$ is the Ringel dual of $\Lambda$ (see \cite{LY}) and the $\Lambda^{q,p}$-module $T$ is a direct sum of the module $S_2$ and the cokernel of the map $P_1^{q^2-1}\rightarrow P_2^{q}$ that sends the $n$-th standard generator of $P_1$ with $n=qi+j<q^2$ ($0\le i\le q-1$, $1\le j\le q$) to $\iota_i(\alpha_j)$ if $i\not=j$ and to $\iota_i(\alpha_j)-\iota_q(\alpha_q)$ if $i=j$. Here we denote by $\iota_i:P_2\rightarrow P_2^{q}$ the canonical $i$-th embedding.
We will write $\omega_{p,q}$ instead of simply $\omega$ to emphasize that we mean the Ringel duality functor for $\Lambda^{p,q}$.
 Note that $S_2$ is the cokernel of the map $P_1^q\xrightarrow{(\alpha_1,\dots,\alpha_q)} P_2$. Thus, $\omega_{p,q}$ sends $P_1$ to $P_1^q\rightarrow P_2$ and $P_2$ to $P_1^{q^2-1}\rightarrow P_2^{q}$.  It also can be shown that the inverse of the Ringel duality functor from $\Lambda^{q,p}$ to $\Lambda^{p,q}$ which we will denote by $\omega_{q,p}^{-1}$ sends $P_1$ to $P_2\rightarrow P_1^p$ and $P_2$ to $P_2^{p}\rightarrow P_1^{p^2-1}$. In the first case the corresponding tilting complex is concentrated in degrees $0$ and $-1$ and in the second case it is concentrated in degrees $0$ and $1$, but it is not very important for us at this point. Note also that $\omega_{q,p}^{-1}(S_2)=P_1$.

Let us introduce the derived autoequivalence $\nu_{p,q}$ of $\Lambda^{p,q}$ by the equality $\nu_{p,q}=\omega_{q,p}\omega_{p,q}$. Then it follows, for example, from \cite{Kra} that $\nu_{p,q}$ is a Serre functor. Then the following lemma follows directly from \cite[Theorem 3.3]{DUB}.

\begin{lemma}\label{reduc} Let $X$ be a tilting complex. Then there is some $m\in\mathbb{Z}$ such that $\nu_{p,q}^m(X)$ can be represented by a two-term complex of projective modules.
\end{lemma}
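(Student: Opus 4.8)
The plan is to deduce the statement from \cite[Theorem 3.3]{DUB} once the relevant structure of $\Db_{\Lambda^{p,q}}$ is made explicit. Since $p,q>0$, the algebra $\Lambda^{p,q}$ is not hereditary and lies in the class treated in \cite{DUB}, so it has global dimension exactly $2$; in particular $\Db_{\Lambda^{p,q}}\simeq\Kbp_{\Lambda^{p,q}}$ and this category has a Serre functor. By \cite{Kra} we may take this Serre functor to be $\nu_{p,q}=\omega_{q,p}\omega_{p,q}$, equivalently the derived Nakayama functor $-\otimes^{\mathbf L}_{\Lambda^{p,q}}D\Lambda^{p,q}$; in particular $\nu_{p,q}(P_i)=I_i$ for $i=1,2$. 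Finally, $\Db_{\Lambda^{p,q}}$ is generated by the exceptional pair of standard objects $(S_2,P_1)$, so $\Lambda^{p,q}$ is exactly an algebra of the kind studied in \cite{DUB}.

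With this in hand I would invoke \cite[Theorem 3.3]{DUB}, which describes the tilting complexes of an algebra of global dimension $2$ with two simple modules and, in particular, shows that every such tilting complex lies in the $\nu_{p,q}$-orbit of a two-term one: for a given tilting complex $X$ there is $m\in\mathbb Z$ for which $\nu_{p,q}^m(X)$ can be represented by a two-term complex of projectives. The only point one must add is the dictionary between the formulation in \cite{DUB} and the present one --- namely that the autoequivalence under which \cite{DUB} takes orbits, being an autoequivalence of a derived category generated by an exceptional pair, must coincide with the Serre functor, hence with $\nu_{p,q}$ by the previous paragraph. Granting that identification, the lemma is immediate.

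The step I expect to carry the real content --- and the step that \cite[Theorem 3.3]{DUB} actually performs, working on the Ringel-dual side in the language of modules and exceptional objects --- is the verification that the $\nu_{p,q}$-orbit of $X$ ever enters the ``two-term region''. The explicit formulas for $\omega_{p,q}$ and $\omega_{q,p}$ are deceptive here: they present $\nu_{p,q}(P_i)$ as the totalization of a bicomplex with three columns, so at first sight $\nu_{p,q}$ enlarges complexes rather than shrinking them. What keeps the argument under control is that $\nu_{p,q}(P_i)=I_i$ has projective dimension at most $\operatorname{gldim}\Lambda^{p,q}=2$, so its minimal projective resolution, viewed as a complex, has width at most three, and --- combining this with Serre duality, which identifies $\Hom_{\Kb_{\Lambda^{p,q}}}(U,V)$ with the $\kk$-dual of $\Hom_{\Kb_{\Lambda^{p,q}}}(V,\nu_{p,q}U)$, and with the vanishing $\Hom_{\Kb_{\Lambda^{p,q}}}(X,X[i])=0$ for $i\neq 0$ --- one can track which contractible summands are created each time one applies $\nu_{p,q}^{\pm1}$ and passes to the minimal representative. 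Pinning down the finitely many consecutive exponents $m$ for which the minimal representative has become two-term is the delicate part; everything else is formal.
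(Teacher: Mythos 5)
Your overall strategy---reduce everything to \cite[Theorem 3.3]{DUB} after identifying $\nu_{p,q}$ with the Serre functor---is the same as the paper's, but your reading of that theorem overstates what it delivers, and this hides the one step the paper actually has to supply. As the theorem is used in the paper, it yields only that $\nu_{p,q}^m(X)$ is isomorphic to a \emph{module} for some $m$, and that this module has projective or injective dimension equal to $1$; it does not directly say that the $\nu_{p,q}$-orbit of $X$ contains a two-term complex of projectives. In the projective-dimension-one case you are done, but in the injective-dimension-one case an extra argument is needed: the paper applies $\nu_{p,q}^{-1}$ once more and uses that $\nu_{p,q}^{-1}$ sends injective modules to projective modules, so that $\nu_{p,q}^{m-1}(X)$ admits a two-term projective representative. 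Your claim that ``the only point one must add is the dictionary'' therefore skips exactly this case.

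The dictionary itself is also justified incorrectly: the assertion that the autoequivalence appearing in \cite{DUB}, ``being an autoequivalence of a derived category generated by an exceptional pair, must coincide with the Serre functor,'' is not a valid inference---such a category has many autoequivalences (shifts, the Picard group, all powers of the Serre functor). The identification has to be made concretely; the paper sidesteps the issue by defining $\nu_{p,q}=\omega_{q,p}\omega_{p,q}$ and citing \cite{Kra} for the fact that this composition is a Serre functor, so that the statement of \cite[Theorem 3.3]{DUB} applies to it. Your final paragraph correctly senses where the non-formal content would lie if one argued directly (tracking how minimal representatives shrink under $\nu_{p,q}^{\pm1}$), but it only sketches difficulties without resolving them, so as written it adds nothing beyond the citation and does not close the gap above.
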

\begin{proof} By \cite[Theorem 3.3]{DUB}, the complex $\nu_{p,q}^m(X)$ is isomorphic to a $\Lambda^{p,q}$-module for some $m\in\mathbb{Z}$. By the same theorem, either projective or injective dimension of the obtained module equals $1$. If the projective dimension is one, then we are done. In the other case, $\nu_{p,q}^{m-1}(X)$ can be represented by a two-term complex of projective modules because $\nu_{p,q}^{-1}$ sends injective modules to projective.
\end{proof}

Let us now classify all two-term tilting complexes giving autoequivalences of the derived category of $\Lambda^{p,q}$. Note that such a complex has an indecomposable direct summand with endomorphism algebra $\kk$.
Let us recall that any object of $\Kbp_{\Lambda}$ can be represented by a unique up to isomorphism in the category $\Cb_{\Lambda}$ complex $(X,d)$ such that $\Im d\subset XJ_\Lambda$. Such a complex $X$ is called {\it a radical complex}.
We will use the general fact that if $X$ is a radical silting complex with nonzero components in the range $[m,n]$ with $m<n$, then $X_m$ and $X_n$ cannot contain a common nonzero direct summand. Indeed, if $\iota:P\hookrightarrow X_n$ is a direct embedding and $\pi:X_m\twoheadrightarrow P$ is a projection on a direct summand, then the morphism from $X$ to $X[n-m]$ that equals $\iota\pi$ in the degree $n$ and equals $0$ in all the other degrees represents a nonzero morphism in $\Kb_{\Lambda}$. It follows from the fact that $\Im(\iota\pi)\not\subset X_mJ_\Lambda$ and any null homotopic morphism has to have image in $XJ_\Lambda$.

\begin{lemma}\label{P1} Suppose that $X$ is a radical indecomposable pretilting complex over $\Lambda=\Lambda^{p,q}$ concentrated in degrees $0$ and $-1$ such that $\End_{\Kb_\Lambda}(X)\cong \kk$. Then $X$ is one of the complexes $P_1$, $P_1[1]$, $\omega_{q,p}(P_1)$ and $\omega_{p,q}^{-1}(P_1)[1]$.
\end{lemma}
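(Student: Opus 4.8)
The plan is to write $X$ as a radical two-term complex $X=(X_{-1}\xrightarrow{d}X_0)$ with $X_{-1}=P_1^{a}\oplus P_2^{b}$ and $X_0=P_1^{c}\oplus P_2^{d}$, and to apply the fact recalled just before the lemma — the two end terms of a radical presilting complex with nontrivial support have no common nonzero direct summand — to get $ac=0$ and $bd=0$. This leaves only the stalk complexes $P_1^{a}$, $P_1^{a}[1]$, $P_2^{b}$, $P_2^{b}[1]$ and the two genuinely two-term shapes $(P_1^{a}\to P_2^{d})$ and $(P_2^{b}\to P_1^{c})$ (both terms nonzero). The stalk cases are immediate: $\End_{\Kb_\Lambda}(P_2^{b})\cong M_b(e_2\Lambda e_2)$ has dimension $b^2(1+pq)>1$ once $b>0$ (here $p,q>0$ is used), while $\End_{\Kb_\Lambda}(P_1^{a})\cong M_a(\kk)$ is a field only for $a=1$; so of the stalk complexes only $P_1$ and $P_1[1]$ survive.

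For $X=(P_1^{a}\xrightarrow{d}P_2^{d})$ I would pass to linear algebra through the identifications $\Hom(P_1,P_2)\cong\kk^{p}$ (basis $\alpha_1,\dots,\alpha_p$), $\Hom(P_2,P_1)\cong\kk^{q}$ (basis $\beta_1,\dots,\beta_q$), $\End(P_1)\cong\kk$, $\End(P_2)\cong\kk\oplus\langle\alpha_i\beta_j\rangle$, and the relations $\beta_j\alpha_i=0$ (whereas the $\alpha_i\beta_j$ are linearly independent). Writing $d=\sum_i\alpha_i\otimes M_i$ with $M_i\in M_{d\times a}(\kk)$, a direct computation of chain maps and null-homotopies of two-term complexes of projectives shows that $\Hom_{\Kb_\Lambda}(X,X[-1])$ is governed by $\bigcap_i\Ker M_i$, so the pretilting hypothesis forces $\bigcap_i\Ker M_i=0$ (hence $(M_1,\dots,M_p)\colon\kk^{a}\to(\kk^{d})^{p}$ is injective and $a\le pd$), and then $\End_{\Kb_\Lambda}(X)$ decomposes as $\{(\phi_{-1},\phi_0)\in M_a(\kk)\times M_d(\kk):\phi_0M_i=M_i\phi_{-1}\ \forall i\}$ together with a quotient of $M_d(\langle\alpha_i\beta_j\rangle)$ of dimension $qd(pd-a)$. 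So $\End_{\Kb_\Lambda}(X)\cong\kk$ forces $a=pd$, whereupon $(M_1,\dots,M_p)$ becomes invertible and the first summand, now forced to be $1$-dimensional, has dimension $d^2$, so $d=1$; thus $a=p$, $(M_1,\dots,M_p)\in\GL_p(\kk)$, and a change of basis in $P_1^{p}$ identifies $X$ with $(P_1^{p}\xrightarrow{(\alpha_1,\dots,\alpha_p)}P_2)=\omega_{q,p}(P_1)$. The shape $(P_2^{b}\xrightarrow{d}P_1^{c})$ is handled by the entirely analogous computation with the roles of $P_1$ and $P_2$ (and of $\alpha,\beta$ and $p,q$) interchanged, and yields $b=1$, $c=q$, $X\cong(P_2\xrightarrow{(\beta_1,\dots,\beta_q)}P_1^{q})=\omega_{p,q}^{-1}(P_1)[1]$.

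The main obstacle is the bookkeeping of the previous paragraph: one must compute simultaneously the chain endomorphisms satisfying $\phi_0d=d\phi_{-1}$, the image of the null-homotopies, and $\Hom_{\Kb_\Lambda}(X,X[-1])$, keeping careful track of which composites of arrows survive (only $\alpha_i\beta_j$, never $\beta_j\alpha_i$) and of the splitting $\End(P_2)=\kk\oplus\langle\alpha_i\beta_j\rangle$; it is exactly this $\alpha\beta/\beta\alpha$ asymmetry that makes the two dimension counts consistent only at $(a,d)=(p,1)$ (respectively $(b,c)=(1,q)$). Finally, one should note that the four complexes listed genuinely satisfy the hypotheses — $P_1$ and $P_1[1]$ obviously, and $\omega_{q,p}(P_1)$, $\omega_{p,q}^{-1}(P_1)[1]$ because they are images of the projective $P_1$ of $\Lambda^{q,p}$ under the equivalences $\omega_{q,p}$, $\omega_{p,q}^{-1}$ (hence pretilting with endomorphism ring $\kk$) and are radical two-term complexes by the descriptions recalled earlier in this section — so the list is exactly these four.
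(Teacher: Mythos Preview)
Your argument is correct but proceeds differently from the paper. Both proofs begin with the same reduction (via the ``no common summand'' fact) to the shapes $P_1^a\to P_2^b$ and $P_2^b\to P_1^c$, and both dispose of the stalk cases easily. From there the paper argues numerically: it invokes Happel's Euler form to turn $\dim\End_{\Kb_\Lambda}(X)=1$ into the relation $a^2+b^2(1+pq)-ab(p+q)=1$, then bounds the space of null-homotopic maps $X\to X[1]$ to obtain $abp\le a^2+b^2-1$, and combines this with injectivity of the differential (forced by indecomposability, since a kernel would split off) to conclude $a=bp$ and hence $b=1$. For the second shape the paper applies the duality $\Hom_{\Lambda^{p,q}}(-,\Lambda^{p,q})$ to pass to the first shape over $\Lambda^{q,p}$. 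You instead compute $\Hom_{\Kb_\Lambda}(X,X[-1])$ and $\End_{\Kb_\Lambda}(X)$ directly in terms of the matrices $M_i$, using the vanishing of the former to get $\bigcap_i\Ker M_i=0$ and then reading off from the explicit decomposition of the latter that $a=pd$ and $d=1$. Your route is more elementary in that it never cites the Euler form, at the price of a longer hands-on computation; it is also interesting that your key input is $\Hom(X,X[-1])=0$, whereas the paper's key inequality comes from $\Hom(X,X[1])=0$. Your handling of the second shape by the ``analogous computation'' does go through (surjectivity of $(N_1\mid\cdots\mid N_q)$ replacing injectivity of $(M_1,\dots,M_p)^T$, and the radical part of $\End$ now sitting in degree $-1$), though the paper's duality trick is cleaner and spares one the repetition.
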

\begin{proof} Suppose first that $X$ has the form $P_1^a\rightarrow P_2^b$. We may assume that $b>0$. Since $\Hom_{\Kb_\Lambda}(X,X[1])=\Hom_{\Kb_\Lambda}(X,X[-1])=0$, by \cite{Hap} we have
\begin{equation}\label{eq_1}
1=\dim_\kk\End_{\Kb_\Lambda}(X)=a^2+b^2(1+pq)-ab(p+q).
\end{equation}
Note now that $\dim_\kk\Hom_{\Cb_\Lambda}(X,X[1])=\dim_\kk\Hom_\Lambda(P_1^a,P_2^b)=abp$. On the other hand, any radical map from $P_2^b$ to $P_2^b$ is annihilated by the differential of $X$, and hence the number of linearly independent null homotopic maps from $X$ to $X[1]$ is not greater than $a^2+b^2-1$. We subtract $1$ because there is the identity map from $X$ to $X$ that gives the zero null homotopic map. Thus, we have $abp\le a^2+b^2-1$. Subtracting \eqref{eq_1} from the obtained inequality, we get $bp\le a$.

Note that the map from $P_1^a$ to $P_2^b$ has to be injective because in the opposite case its kernel would be a direct summand of $X$. We have $a(q+1)=\dim_\kk P_1^a\le\dim_\kk P_2^bJ_{\Lambda}=bp(q+1)$, and hence $a\le bp$. Thus, we have $a=bp$ and it follows from \eqref{eq_1} that $X=\omega_{q,p}(P_1)$.

If $X$ has the form $P_2^a\rightarrow P_1^b$, then the complex $\Hom_{\Lambda^{p,q}}(X[-1],\Lambda^{p,q})$ of $\Lambda^{q,p}$-modules  has the form $P_1^b\rightarrow P_2^a$, and hence it is isomorphic to $P_1[1]$ or $\omega_{p,q}(P_1)$ by the argument above. Then $X$ is isomorphic to $P_1$ or $\omega_{p,q}^{-1}(P_1)[1]$.
\end{proof}

Let us recall that, for a finite dimensional algebra $\Lambda$, the Picard group $\Pic(\Lambda)$ is the group of autoequivalences of the category of $\Lambda$-modules modulo natural isomorphisms. If $\Lambda$ is basic, then this group is isomorphic to the group of outer automorphisms $\Out(\Lambda)=\Aut(\Lambda)/\Inn(\Lambda)$. Here $\Aut(\Lambda)$ is the group of automorphisms of $\Lambda$ and $\Inn(\Lambda)$ is the group of inner automorphisms of $\Lambda$. This isomorphism is induced by the map $\Aut(\Lambda)\rightarrow \Pic(\Lambda)$ that sends an automorphism $\theta:\Lambda\rightarrow\Lambda$ to the autoequivalence $-\otimes_{\Lambda}\Lambda_{\theta^{-1}}$. Here $\Lambda_{\theta^{-1}}$ is  the bimodule coinciding with $\Lambda$ as a left module and having the right multiplication $*$ by elements of $\Lambda$ defined by the equality $x*a=x\theta^{-1}(a)$, where the multiplication on the right side is the original multiplication of $\Lambda$. The group $\Pic(\Lambda)$ is a subgroup of $\TrPic(\Lambda)$ in a natural way. Moreover, an element of $\TrPic(\Lambda)$ belongs to $\Pic(\Lambda)$ if and only if the radical representative of the corresponding tilting complex is concentrated in degree zero. In fact, the autoequivalence $-\otimes_{\Lambda}\Lambda_{\theta^{-1}}$ can be defined by the tilting complex $\Lambda$ and the isomorphism from $\Lambda$ to $\End_{\Lambda}(\Lambda)$ that sends $x\in\Lambda$ to the left multiplication by $\theta(x)$.

\begin{corollary}\label{grprod} Suppose that $\Lambda=\Lambda^{p,q}$ for some $p,q>0$. Let $H\cong \Pic(\Lambda)\times\mathbb{Z}$ be the subgroup of $\TrPic(\Lambda)$ generated by the Picard group of $\Lambda$ and the shift and $K\cong \mathbb{Z}$ be the subgroup generated by $\omega_{p,p}$ in the case $p=q$ and by $\nu_{p,q}$ in the case $p\not=q$.
Then $K\cap H=\{\Id\}$ and $KH=\TrPic(\Lambda)$.
\end{corollary}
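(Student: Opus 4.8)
The plan is the following. Write $\kappa$ for the distinguished generator of $K$, so $\kappa=\omega_{p,p}$ when $p=q$ and $\kappa=\nu_{p,q}$ when $p\neq q$; in both cases $\nu_{p,q}\in K$, since $\nu_{p,p}=\omega_{p,p}^2$. I use the standard bijection between $\TrPic(\Lambda)$ and pairs $(T,\phi)$ with $T$ a tilting complex and $\phi\colon\Lambda\xrightarrow{\sim}\End_{\Kb_\Lambda}(T)$, modulo isomorphism: forgetting $\phi$, two elements of $\TrPic(\Lambda)$ give isomorphic $T$ iff they differ by an element of $\Pic(\Lambda)$, and the shift acts by $T\mapsto T[1]$, so that $F\in H$ exactly when the radical representative of $F(\Lambda)$ is concentrated in one degree. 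For the equality $KH=\TrPic(\Lambda)$, take $F\in\TrPic(\Lambda)$ and set $T=F(\Lambda)$. By Lemma~\ref{reduc} there is $m$ with $\nu_{p,q}^m(T)$ a two-term radical complex of projectives, and since $\nu_{p,q}^m\in K$ we may, after replacing $F$ by $\nu_{p,q}^m F$, assume $T=T_0\oplus T_1$ is itself two-term. Exactly one summand, say $T_0$, has $\End_{\Kb_\Lambda}(T_0)\cong\kk$ — the one matching under $\phi$ the vertex $1$, the only vertex with $e_1\Lambda^{p,q}e_1=\kk$ (whereas $\dim_\kk\End(P_2)=1+pq>1$) — so by Lemma~\ref{P1}, $T_0\in\{P_1,\;P_1[1],\;\omega_{q,p}(P_1),\;\omega_{p,q}^{-1}(P_1)[1]\}$.

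I then run a short case analysis on $T_0$, in each case using: that the $g$-vectors of $T_0$ and $T_1$ form a $\mathbb Z$-basis (recalled above); that the $P_1$- and $P_2$-multiplicities of the two summands in degrees $-1$ and $0$ cannot both be nonzero; formula~\eqref{eq_1} applied to the pretilting complex $T_1$, whose endomorphism algebra has dimension $1+pq$; and the indecomposability of $T_1$. If $T_0=P_1$ (and symmetrically $T_0=P_1[1]$), the multiplicity conditions force the $P_1$-multiplicity of $T_1$ in degree $-1$ to vanish, and then $T_1$ is either a projective stalk — necessarily $P_2$, giving $T\cong\Lambda$ — or of the form $P_2\to P_1^{c}$; in the latter case \eqref{eq_1} forces $c\in\{0,p+q\}$, but $c=0$ makes $\End_{\Kb_\Lambda}(T)\cong\kk\times\End(P_2)$ disconnected, and $c=p+q$ makes $T_1$ decomposable because any radical map $P_2\to P_1^{\,p+q}$ has, after a base change of the target, at least $p$ zero components. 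Hence $T\cong\Lambda$ or $\Lambda[1]$ and $F\in H$. If $T_0=\omega_{q,p}(P_1)=S_2$, the constraints leave $T_1\in\{\omega_{q,p}(P_2),\;(P_1^{pq+1}\to P_2^{q})\}$; the second is impossible since $\dim_\kk P_1^{pq+1}>\dim_\kk P_2^{q}$ makes its differential have nonzero kernel, into which $S_2$ embeds, so $\Hom_{\Kb_\Lambda}(T,T[-1])\neq0$, contradicting that $T$ is tilting. Thus $T=\omega_{q,p}(P_1)\oplus\omega_{q,p}(P_2)=\omega_{q,p}(\Lambda^{q,p})$, whose endomorphism algebra is $\Lambda^{q,p}$; as this must be $\Lambda^{p,q}$, we get $p=q$ and $T=\omega_{p,p}(\Lambda)=\kappa(\Lambda)$, so $F\in\kappa H$. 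The remaining case $T_0=\omega_{p,q}^{-1}(P_1)[1]$ reduces to the previous one by the duality $\mathbf R\Hom_{\Lambda^{p,q}}(-,\Lambda^{p,q})$ used in the proof of Lemma~\ref{P1}, and similarly forces $p=q$ and $T=\omega_{p,p}^{-1}(\Lambda)[1]$, so $F\in\kappa^{-1}H$. In every case $F\in KH$.

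For $K\cap H=\{\Id\}$, since elements of $H$ send $\Lambda$ to a stalk complex it suffices to show $\kappa^{n}(\Lambda)$ is not a stalk complex for $n\neq0$. For $n=\pm1$ this is immediate: $\nu_{p,q}(\Lambda)\cong D\Lambda$ is a stalk only if $\Lambda^{p,q}$ is selfinjective, which it is not, having finite positive global dimension; and $\omega_{p,p}(\Lambda)=S_2\oplus(P_1^{p^2-1}\to P_2^{p})$ is visibly a genuine two-term complex. For $|n|\geq2$ one argues by induction that the length of the radical representative of $\kappa^{n}(\Lambda)$ increases with $|n|$, the point being that applying $\nu_{p,q}$ (or $\omega_{p,q}$, $\omega_{q,p}$) away from $\Lambda$ inserts projective resolutions of injective modules, of length at most $1+\mathrm{gl.dim}\,\Lambda^{p,q}$, and that these do not cancel — this is precisely the structural information about $\nu_{p,q}$-orbits of tilting complexes provided by \cite[Theorem~3.3]{DUB}, which already underlies Lemma~\ref{reduc}. (When $|p-q|\geq2$ there is a shortcut: the matrix of $\nu_{p,q}$ on $K_0(\Db_{\Lambda^{p,q}})\cong\mathbb Z^2$ has trace $2-(p-q)^2$, of absolute value at least $2$, hence has infinite order and no power equal to $\pm\Id$, so already $\nu_{p,q}^{n}\notin H$ for $n\neq0$; only the cases $p=q$ and $|p-q|=1$ need the length argument.) Therefore $\kappa^{n}\in H$ only for $n=0$, and combined with the previous paragraph this gives $K\cap H=\{\Id\}$ and $KH=\TrPic(\Lambda)$.

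The main obstacle is twofold. First, one must make watertight the exclusions in the cases $T_0\in\{\omega_{q,p}(P_1),\,\omega_{p,q}^{-1}(P_1)[1]\}$, i.e.\ be certain that the only two-term tilting complexes over $\Lambda^{p,q}$ with endomorphism algebra $\Lambda^{p,q}$ are $\Lambda$, $\Lambda[1]$ and (when $p=q$) $\omega_{p,p}^{\pm1}(\Lambda)$ up to shift, and in particular that the spurious second summand forced by \eqref{eq_1} is genuinely ruled out; the $\Hom(T,T[-1])\neq0$ observation seems to be the right tool there. Second, in the intersection argument one must upgrade the ``length grows'' statement to a genuine induction, which is where the analysis of the $\nu_{p,q}$-orbits of tilting complexes from \cite{DUB} is doing the essential work.
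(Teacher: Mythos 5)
Your second half, the proof that $KH=\TrPic(\Lambda)$, is essentially correct, and it follows a genuinely different route from the paper: you classify the possible complement $T_1$ of the $\kk$-endomorphism summand by combining the two-term multiplicity constraints, the fact that $g$-vectors of the summands form a $\mathbb{Z}$-basis of $K_0$, and the Euler-form identity, and then exclude the spurious solutions ($c=0$ via disconnectedness, $c=p+q$ via decomposability, $P_1^{pq+1}\to P_2^q$ via $\Hom_{\Kb_\Lambda}(T,T[-1])\neq 0$); these exclusions do check out. The paper instead applies a shift and one of $\omega_{p,q}$, $\omega_{q,p}^{-1}$ to make one summand literally equal to $P_1$ (using Lemma \ref{P1}) and then shows by two short Hom-nonvanishing observations (an end term $P_1^a$ maps nontrivially to $P_1[-r]$; an end term $P_2^a$ receives a nontrivial map from $P_1[-r]$ via $\alpha_1$) that the other summand is a stalk in degree $0$; this avoids the two-term silting machinery and does not even need the complement to be two-term. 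Your approach costs more case analysis but stays inside $\Kbp_{\Lambda^{p,q}}$.

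The genuine gap is in $K\cap H=\{\Id\}$. Your $K_0$ shortcut covers only $|p-q|\geq 2$, and it cannot be pushed further: for $p=q$ the matrix of $\omega_{p,p}$ on $K_0$ has trace $0$ and determinant $-1$, so it squares to the identity, i.e. $\nu_{p,p}=\omega_{p,p}^2$ acts on $K_0$ exactly as elements of $H$ do; and for $|p-q|=1$ the matrix of $\nu_{p,q}$ has trace $\pm 1$ and determinant $1$, hence finite order. So precisely in the remaining cases no $K_0$-argument exists, and there you only assert that ``the length of the radical representative grows and the inserted resolutions do not cancel,'' attributing this to \cite[Theorem 3.3]{DUB}. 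That theorem, as used in Lemma \ref{reduc}, only says that some $\nu_{p,q}^m$-translate of a tilting complex is a module of projective or injective dimension one; it does not contain the non-cancellation statement, which is the actual content to be proved (and you concede as much in your closing paragraph). The paper supplies exactly this missing induction: if $X$ is a radical complex concentrated in $[-l,0]$ with $X_0=P_2^b$ and $X_{-l}=P_1^a$, $a,b>0$, then the radical representative of $\omega_{p,q}(X)$ is concentrated in $[-l-1,0]$ with end terms again of the form $P_2^c$ and $P_1^d$, and $c,d>0$ because $\Hom_{\Kb_\Lambda}\big(\omega_{p,q}^{-1}(\Lambda),X\big)$ and $\Hom_{\Kb_\Lambda}\big(X,\omega_{p,q}^{-1}(\Lambda[l+1])\big)$ are nonzero; starting from $\Lambda$ this shows that $\omega_{p,q}^t(\Lambda)$, hence $\omega_{p,p}^t(\Lambda)$ and $\nu_{p,q}^t(\Lambda)$, has length $t+1>1$ for all $t>0$, which is what $K\cap H=\{\Id\}$ requires. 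Without an argument of this kind your proof is incomplete exactly in the cases $p=q$ and $|p-q|=1$.
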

\begin{proof} Let us prove that $K\cap H=\{\Id\}$. Since for any $F\in H$ the radical representative of $F(\Lambda)$ is concentrated in one degree, it is enough to prove for the generator $\rho$ of the group $K$ that $\rho^t(\Lambda)$ has length more than one for any $t>0$. Note that the radical representative of $\omega_{p,q}(\Lambda)$ is concentrated in degrees $-1$ and $0$ and has the form $P_1^a\rightarrow P_2^b$. Then it is enough to prove that, for any radical complex $X$ concentrated in the interval $[-l,0]$ such that $X_0=P_2^b$ and $X_{-l}=P_1^a$ for some $a,b>0$, the radical representative $Y$ of $\omega_{p,q}(X)$ is concentrated in the interval $[-l-1,0]$ and has $Y_0=P_2^c$ and $Y_{-l-1}=P_1^d$ for some $d,c>0$. Everything is clear from the definition of $\omega_{p,q}$ except the fact $c,d>0$. But the last assertion follows from the fact that $\Hom_{\Kb_\Lambda}\big(\omega_{p,q}^{-1}(\Lambda),X\big)$ and $\Hom_{\Kb_\Lambda}\big(X,\omega_{p,q}^{-1}(\Lambda[l+1])\big)$ are nonzero.

It remains to prove that $KH=\TrPic(\Lambda)$.
Let us pick some $F\in\TrPic(\Lambda)$ and denote by $X$ the radical representative of $F(\Lambda)$.
It follows from Lemma \ref{reduc} that, for some $m\in\mathbb{Z}$, the radical representative of $\nu_{p,q}^m(X)$ has length not greater than two. Thus, we may assume that $X$ has length  not greater than two. Since $X$ has the direct summand $F(P_1)$ with endomorphism algebra $\kk$ in the homotopy category, we can apply some shift and one of the equivalences $\omega_{p,q}$ or $\omega_{q,p}^{-1}$ to $X$ and get a tilting complex with a direct summand isomorphic to $P_1$ by Lemma \ref{P1}. Thus, we may assume that either $F(P_1)=P_1$ or $\omega_{p,q}F(P_1)=P_1$.
Let $Y$ be the radical representative of $F(P_2)$ or $\omega_{p,q}F(P_2)$ respectively. Suppose that $Y$ is concentrated in the interval $[r,s]$ such that $Y_r,Y_s\not=0$.
If $Y_r$ has the form $P_1^a$ with $a>0$, then $\Hom_{\Kb_{\Lambda'}}(Y,P_1[-r])\not=0$. If $Y_r$ has the form $P_2^a$ with $a>0$, then the map $\alpha_1:P_1\rightarrow P_2$ gives a nonzero element of $\Hom_{\Kb_{\Lambda'}}(P_1[-r],Y)$ because the kernel of any radical map from $P_2$ to a projective module contains the image of $\alpha_1$. Here $\Lambda'$ denotes either $\Lambda$ or $\Lambda^{\rm op}$ depending on what category the complex $Y$ belongs to. Thus, we have $r=0$. Analogously, $s=0$. Thus, $X$ or $\omega_{p,q}(X)$ is concentrated in degree zero. In the first case we have $F\in \Pic(\Lambda)$. In the second case we have an equivalence $\omega_{p,q}F:\Kb_\Lambda\rightarrow \Kb_{\Lambda^{\rm op}}$ that sends $\Lambda$ to $\Lambda^{\rm op}$. Thus, this case is possible only when $\Lambda\cong\Lambda^{\rm op}$, i.e. $p=q$. If $p=q$ and $\omega_{p,p}(X)$ is concentrated in degree $0$, then $\omega_{p,p}F\in \Pic(\Lambda)$ and the corollary is proved.
\end{proof}

It remains to calculate $\Pic(\Lambda^{p,q})\cong \Out(\Lambda^{p,q})$ and derive the commutation formulas for $\omega_{p,q}$, $\omega_{q,p}$ and elements of $\Pic(\Lambda^{p,q})$ and $\Pic(\Lambda^{q,p})$. The next lemma implements the first part of this plan.
Let us introduce the vector spaces ${\bA}={\bA}^{p}=e_2\Lambda^{p,q}e_1=\oplus_{i=1}^p\kk\alpha_i$ and ${\bB}={\bB}^{q}=e_1\Lambda^{p,q}e_2=\oplus_{i=1}^q\kk\beta_i$. We identify ${\bA}$ and ${\bB}$ with the corresponding subspaces of $\Lambda^{p,q}$.
Moreover, we identify the elements of $\GL({\bA})\times \GL({\bB})$ with the automorphisms of $\Lambda^{p,q}$ induced by them. We also denote by $D=D^{p,q}$ the subgroup of $\GL({\bA})\times \GL({\bB})$ formed by the elements of the form $(\lambda\Id_{{\bA}},\lambda^{-1}\Id_{{\bB}})$ for $\lambda\in\kk^*$.

\begin{lemma}\label{pic} $\Out(\Lambda^{p,q})\cong \big(\GL({\bA})\times \GL({\bB})\big)/D$. This isomorphism is induced by the canonical projection $\Aut(\Lambda^{p,q})\twoheadrightarrow \Out(\Lambda^{p,q}))$.
\end{lemma}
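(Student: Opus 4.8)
The plan is to pass to the subgroup $G\le\Aut(\Lambda)$ of automorphisms fixing the vertex idempotents $e_1,e_2$ (write $\Lambda=\Lambda^{p,q}$ throughout), to describe $G$ explicitly, and then to divide out the inner automorphisms lying in $G$. First I would check that every automorphism can be made idempotent-fixing after modification by an inner one: for $\theta\in\Aut(\Lambda)$ the set $\{\theta(e_1),\theta(e_2)\}$ is again a complete set of primitive orthogonal idempotents and hence is conjugate to $\{e_1,e_2\}$ by a unit, while $\theta$ cannot interchange the two idempotents since that would force an algebra isomorphism $e_1\Lambda e_1\cong e_2\Lambda e_2$, impossible because these have $\kk$-dimensions $1+pq$ and $1$ (this is where $p,q>0$ enters). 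Thus $\Aut(\Lambda)=\Inn(\Lambda)\cdot G$, so $\Out(\Lambda)\cong G/\big(G\cap\Inn(\Lambda)\big)$, and this isomorphism is the one induced by the canonical projection $\Aut(\Lambda)\twoheadrightarrow\Out(\Lambda)$.

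The second step is to identify $G$ with $\GL(\bA)\times\GL(\bB)$. Any $\theta\in G$ preserves $\bA=e_2\Lambda e_1$ and $\bB=e_1\Lambda e_2$ and is completely determined by the pair $\big(\theta|_{\bA},\theta|_{\bB}\big)\in\GL(\bA)\times\GL(\bB)$, since $\Lambda$ is generated as an algebra by $e_1,e_2$ together with $\bA\cup\bB$. Conversely, a pair $(g,h)\in\GL(\bA)\times\GL(\bB)$ extends: sending $e_i\mapsto e_i$, each $\alpha_i\mapsto g(\alpha_i)$ and each $\beta_j\mapsto h(\beta_j)$ defines an algebra homomorphism $\kk Q^{p,q}\to\Lambda$ which annihilates the generators $\beta_j\alpha_i$ of $I^{p,q}$ — they map to products of an element of $\bA$ with an element of $\bB$, and $\bA\cdot\bB=0$ in $\Lambda$ — hence factors through $\Lambda$; the resulting endomorphism is surjective and therefore bijective. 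This yields a group isomorphism $\GL(\bA)\times\GL(\bB)\xrightarrow{\sim}G$.

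The last step is to locate $G\cap\Inn(\Lambda)$ inside $\GL(\bA)\times\GL(\bB)$. Conjugation by a unit $u$ fixes $e_1$ and $e_2$ exactly when $u$ is a unit of $e_1\Lambda e_1\times e_2\Lambda e_2$, i.e. $u=\lambda_1 e_1+c+\lambda_2 e_2$ with $\lambda_1,\lambda_2\in\kk^*$ and $c\in J_{\Lambda}^2$ (using $e_2\Lambda e_2=\kk e_2$). Since $J_{\Lambda}^3=0$, a short computation gives $uxu^{-1}=\tfrac{\lambda_2}{\lambda_1}\,x$ for $x\in\bA$ and $uxu^{-1}=\tfrac{\lambda_1}{\lambda_2}\,x$ for $x\in\bB$, with no dependence on $c$; so under the isomorphism of the second step $G\cap\Inn(\Lambda)$ corresponds precisely to $D=\big\{(\lambda\,\Id_{\bA},\lambda^{-1}\,\Id_{\bB}):\lambda\in\kk^*\big\}$. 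Combining the three steps gives $\Out(\Lambda)\cong\big(\GL(\bA)\times\GL(\bB)\big)/D$, with the isomorphism induced by $\Aut(\Lambda)\twoheadrightarrow\Out(\Lambda)$, as required.

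The only genuine computation here is the conjugation formula in the last step. The point needing the most care is the reduction in the first step — that an automorphism may be assumed to fix $e_1,e_2$ and cannot permute the two vertices — which relies on the conjugacy of complete sets of primitive orthogonal idempotents together with the inequality $e_1\Lambda e_1\not\cong e_2\Lambda e_2$.
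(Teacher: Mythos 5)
Your proposal is correct and follows essentially the same route as the paper: reduce to automorphisms fixing $e_1,e_2$, identify these with $\GL(\bA)\times\GL(\bB)$, and compute that the inner ones among them form exactly $D$ (your conjugation computation with $u=\lambda_1e_1+\lambda_2e_2+c$, $c\in J_{\Lambda}^2$, is the same as the paper's). The one genuine difference is the first step: the paper simply cites Pollack and Guil-Asensio--Saor\'in to pass to automorphisms stabilizing the subalgebra generated by $e_1$ and $e_2$, whereas you prove this reduction by hand, via conjugacy of complete sets of primitive orthogonal idempotents together with the dimension count $\dim_\kk e_1\Lambda e_1\neq\dim_\kk e_2\Lambda e_2$ ruling out a vertex swap --- which is indeed exactly where $p,q>0$ enters, and which the paper dismisses with ``it is clear''. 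You also verify explicitly that every pair $(g,h)$ extends to an automorphism of $\Lambda^{p,q}$, a point the paper leaves implicit. So your argument is more self-contained at the cost of a few lines, while the citation buys the paper brevity and a statement valid for general basic algebras. One cosmetic remark: with the paper's conventions ($\bA=e_2\Lambda^{p,q}e_1\ni\alpha_i$, $\bB=e_1\Lambda^{p,q}e_2\ni\beta_j$), the large corner is $e_2\Lambda^{p,q}e_2$, of dimension $1+pq$ and spanned by $e_2$ and the products $\alpha_i\beta_j$, while $e_1\Lambda^{p,q}e_1=\kk e_1$, and the product that vanishes is $\bB\cdot\bA$ rather than $\bA\cdot\bB$; your labels are consistently reversed, which changes nothing in the argument (the dimension obstruction and the identification of $D$ are symmetric), but should be aligned with the paper's notation.
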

\begin{proof} It follows from \cite{Pol} and \cite{GAS} that $\Out(\Lambda^{p,q})\cong \Aut_S(\Lambda^{p,q})/\big(\Inn_S(\Lambda^{p,q})\cap \Aut_S(\Lambda^{p,q})\big)$, where $\Aut_S(\Lambda^{p,q})$ denotes the set of automorphisms of $\Lambda^{p,q}$ that stabilize the subalgebra generated by $e_1$ and $e_2$. It is clear that all such isomorphisms act identically on $e_1$ and $e_2$, and hence have the form $(g,h)$ for some $g\in\GL({\bA})$ and $h\in\GL({\bB})$. It remains to show that $\Inn_S(\Lambda^{p,q})\cap \Aut_S(\Lambda^{p,q})=D$. But it follows from the fact that any invertible $x$ such that $x^{-1}e_1x=e_1$ and
$x^{-1}e_2x=e_2$ has the form $x=\lambda_1e_1+\lambda_2e_2+\sum\limits_{1\le i\le p,1\le j\le q}\kappa_{i,j}\alpha_i\beta_j$ for some $\lambda_1,\lambda_2\in\kk^*$ and $\kappa_{i,j}\in\kk$. Then modulo the center of $\Lambda^{p,q}$ the element $x$ has the form $x=\lambda e_1+e_2$ for some $\lambda\in\kk^*$. It is easy to see that the inner automorphisms induced by such elements $x$ are exactly automorphisms from $D$.
\end{proof}

For the commutator formula, we will need the description of $\omega_{p,q}$ on morphisms. We will use for this a description of $\omega_{p,q}$ that is a little different from the one we used before. Let us introduce the $\Lambda^{p,q}$-module $\Xi=\Xi^{p,q}$. As usually, to do this we describe the spaces $\Xi_1=\Xi e_1$ and $\Xi_2=\Xi e_2$ and the maps $\phi_{1,2}:{\bB}\rightarrow \Hom_\kk(\Xi_1,\Xi_2)$ and $\phi_{2,1}:{\bA}\rightarrow \Hom_\kk(\Xi_2,\Xi_1)$ induced by the multiplication by the arrows of $Q^{p,q}$. We define $\Xi_1=\kk$ and $\Xi_2={\bA}^*\oplus {\bB}=\Hom_\kk({\bA},\kk)\oplus {\bB}$. We define $\phi_{1,2}(v)(1)=(0,v)$ and $\phi_{2,1}(u)(f,v)=f(u)$ for $u\in{\bA}$, $v\in{\bB}$ and $f\in{\bA}^*$. Now we have natural isomorphisms $\Hom_{\Lambda^{p,q}}(S_2,\Xi)\cong {\bB}$ and $\Hom_{\Lambda^{p,q}}(\Xi,S_2)\cong ({\bA}^*)^*\cong {\bA}$. We will identify the corresponding spaces via these isomorphisms. In particular, $\GL({\bA})$ acts on $\Hom_{\Lambda^{p,q}}(\Xi,S_2)$ and $\GL({\bB})$ acts on $\Hom_{\Lambda^{p,q}}(S_2,\Xi)$. Now it is easy to see that actually $\omega_{p,q}(P_1)$ is a minimal $\Lambda^{q,p}$-projective resolution of $S_2$ and $\omega_{p,q}(P_2)$ is a minimal $\Lambda^{q,p}$-projective resolution of $\Xi^{q,p}$. Thus, replacing $\omega_{p,q}$ by a naturally isomorphic equivalence, we may assume that $\omega_{p,q}(\Lambda^{p,q})=S_2\oplus \Xi^{q,p}$ and the isomorphism $\Lambda^{p,q}\cong \End_{\Lambda^{q,p}}(S_2\oplus \Xi^{q,p})$ determined by $\omega_{p,q}$ is induced by the isomorphisms ${\bA}^{p}\cong {\bB}^{p}=\Hom_{\Lambda^{q,p}}(S_2,\Xi^{q,p})$ and ${\bB}^{q}\cong {\bA}^{q}=\Hom_{\Lambda^{q,p}}(\Xi^{q,p},S_2)$. Here the isomorphisms ${\bA}^{p}\cong {\bB}^{q}$ and ${\bB}^{p}\cong {\bA}^{q}$ are the {\it renaming isomorphisms}
sending $\alpha_i$ to $\beta_i$ and $\beta_j$ to $\alpha_j$ for $1\le i\le p$ and $1\le j\le q$. Note that the renaming isomorphisms induce the isomorphism 
$$
\Phi_{p,q}:\GL({\bA}^{p})\times \GL({\bB}^{q})\cong \GL({\bA}^{q})\times \GL({\bB}^{p})
$$
that induces an isomorphism from $\Pic(\Lambda^{p,q})$ to $\Pic(\Lambda^{q,p})$. We denote the induced isomorphism by $\Phi_{p,q}$ too. Note that the composition $\Phi_{q,p}\Phi_{p,q}$ is the identity automorphism.
In the case $p=q$ we get an automorphism $\Phi_{p,p}\in\Aut\big(\Pic(\Lambda^{p,p})\big)$ of order two interchanging two copies of $\GL_p(\kk)\cong \GL({\bA}^{p})\cong \GL({\bB}^{p})$ and the corresponding semidirect product $\Pic(\Lambda^{p,p})\rtimes_{\Phi_{p,p}}\mathbb{Z}$ with the multiplication defined by the equality $(F,a)*(G,b)=(F\Phi_{p,p}^a(G),a+b)$ for $a,b\in\mathbb{Z}$ and $F,G\in \Pic(\Lambda^{p,p})$.

\begin{lemma}\label{comm} $\omega_{p,q}F\cong\Phi_{p,q}(F)\omega_{p,q}$ for any $F\in \Pic(\Lambda^{p,q})$.
\end{lemma}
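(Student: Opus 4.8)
\emph{Proof proposal.} The plan is to exhibit both $\omega_{p,q}F$ and $\Phi_{p,q}(F)\omega_{p,q}$ as standard equivalences $\Db_{\Lambda^{p,q}}\to\Db_{\Lambda^{q,p}}$ and to check that they are represented by the same tilting complex together with the same induced isomorphism on its endomorphism ring; since such a pair of data determines a standard equivalence up to natural isomorphism, this will prove the lemma. Fix a representative automorphism $\theta\in\Aut_S(\Lambda^{p,q})$ of $F$, so that by Lemma~\ref{pic} $\theta=(g,h)$ with $g\in\GL(\bA)$, $h\in\GL(\bB)$, and $F=-\otimes_{\Lambda^{p,q}}(\Lambda^{p,q})_{\theta^{-1}}$; write $\zeta\colon\Lambda^{p,q}\xrightarrow{\sim}\End_{\Lambda^{q,p}}(S_2\oplus\Xi^{q,p})$ for the renaming isomorphism determined by the chosen model of $\omega_{p,q}$. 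Since $F$ is represented by the tilting complex $\Lambda^{p,q}$ together with the isomorphism $\Lambda^{p,q}\to\End_{\Lambda^{p,q}}(\Lambda^{p,q})$ carrying $x$ to left multiplication by $\theta(x)$, applying $\omega_{p,q}$ — i.e.\ using that the data of a composite of standard equivalences is obtained by applying the outer functor to the complex and to the isomorphism of the inner one — shows that $\omega_{p,q}F$ is represented by $\omega_{p,q}(\Lambda^{p,q})=S_2\oplus\Xi^{q,p}$ together with $\zeta\circ\theta$.

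For $\Phi_{p,q}(F)\omega_{p,q}$ the inner functor contributes the complex $S_2\oplus\Xi^{q,p}$ and the isomorphism $\zeta$, so it remains to describe $\Phi_{p,q}(F)$ on $S_2\oplus\Xi^{q,p}$. Writing $\Phi_{p,q}(F)=-\otimes_{\Lambda^{q,p}}(\Lambda^{q,p})_{(\theta')^{-1}}$ with $\theta'=\Phi_{p,q}(\theta)$, the summand $S_2$ is fixed on the nose, while $\Phi_{p,q}(F)$ sends $\Xi^{q,p}$ to the module with the same underlying spaces but with the arrow actions precomposed by $\theta'$. I would write down the isomorphism $\iota\colon\Phi_{p,q}(F)(\Xi^{q,p})\xrightarrow{\sim}\Xi^{q,p}$ that is the identity on $\Xi_1=\kk$ and, on $\Xi_2=(\bA^q)^*\oplus\bB^p$, is the inverse transpose of $\theta'|_{\bA^q}$ on the first summand and $\theta'|_{\bB^p}$ on the second. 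Using $\iota$ together with the identity on $S_2$ to identify $\Phi_{p,q}(F)(S_2\oplus\Xi^{q,p})$ with $S_2\oplus\Xi^{q,p}$, one reads off from the definitions of $\Xi^{q,p}$ and $\iota$ that the automorphism of $\End_{\Lambda^{q,p}}(S_2\oplus\Xi^{q,p})$ induced by $\Phi_{p,q}(F)$ fixes the two idempotents, acts as $\theta'|_{\bB^p}$ on $\Hom_{\Lambda^{q,p}}(S_2,\Xi^{q,p})=\bB^p$, and acts as $\theta'|_{\bA^q}$ on $\Hom_{\Lambda^{q,p}}(\Xi^{q,p},S_2)=\bA^q$. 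Transporting this automorphism to $\Lambda^{p,q}$ through $\zeta$ and using that $\Phi_{p,q}$ is by definition induced by the renaming isomorphisms $\bA^p\cong\bB^p$ and $\bB^q\cong\bA^q$ — so that $\theta'|_{\bB^p}$ and $\theta'|_{\bA^q}$ correspond to $g$ and $h$ — yields exactly $\theta$. Thus $\Phi_{p,q}(F)\omega_{p,q}$ is represented by $S_2\oplus\Xi^{q,p}$ (via the identification just made) together with $\zeta\circ\theta$, the same data as for $\omega_{p,q}F$, and the two functors are naturally isomorphic.

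The routine parts — the composition law for standard-equivalence data, the fact that the $D$-indeterminacy in the choice of $\theta$ is inner and hence irrelevant, and the bookkeeping with $\Hom$-spaces — are harmless. The one genuinely delicate step, and where I expect the main difficulty, is the penultimate computation: one has to be sure that the dualization built into the $(\bA^q)^*$-summand of $\Xi^{q,p}$, which forces the inverse transpose of $\theta'$ into $\iota$, recombines with the plain occurrence of $\theta'$ on the $\bB^p$-summand so that the net induced automorphism of $\End_{\Lambda^{q,p}}(S_2\oplus\Xi^{q,p})$ is $\theta$ itself — not a contragredient of it — once everything is read through the renaming identification; this is precisely the statement that the definition of $\Phi_{p,q}$ is compatible with the description of $\omega_{p,q}(\Lambda^{p,q})$ coming from $\Xi$.
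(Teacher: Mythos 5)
Your proposal is correct and takes essentially the same route as the paper's proof: both fix $\theta$ with $F\cong-\otimes_{\Lambda}\Lambda_{\theta^{-1}}$ (where $\Lambda=\Lambda^{p,q}$), use the model $\omega_{p,q}(\Lambda)=S_2\oplus\Xi^{q,p}$ with the renaming isomorphism, and conclude by exhibiting an explicit isomorphism of this module intertwining the generator actions --- your $\iota$ is precisely the inverse of the map $\phi$ the paper constructs, and your transport-through-$\zeta$ computation is the paper's check on the generators ${\bA}^p\cup{\bB}^q$. One small slip worth fixing: the twisted module is $(\Xi^{q,p})_{(\theta')^{-1}}$, i.e.\ the arrow actions are precomposed with $(\theta')^{-1}$ rather than with $\theta'$; with that correction the $\iota$ you wrote down (identity on $\Xi_1$, inverse transpose of $\theta'$ on $({\bA}^{q})^*$, $\theta'$ on ${\bB}^{p}$) is indeed a module isomorphism and the rest of your argument, including the delicate recombination you flag at the end, goes through exactly as you describe.
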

\begin{proof} Let $\Lambda$ denote $\Lambda^{p,q}$ and $\Phi$ denote $\Phi_{p,q}$.   We need to construct an isomorphism $\phi:\omega_{p,q}F(\Lambda)\cong \Phi(F)\omega_{p,q}(\Lambda)$ such that $\phi\circ \omega_{p,q}F(x)=\Phi(F)\omega_{p,q}(x)\circ\phi$ for any $x\in\End_{\Lambda}(\Lambda)=\Lambda$.
Let us introduce  $\theta\in\Aut(\Lambda)$  such that $F\cong -\otimes_\Lambda\Lambda_{\theta^{-1}}$.
We have by our definitions $\omega_{p,q}F(\Lambda)=S_2\oplus \Xi^{q,p}$ and $\omega_{p,q}F(x)=\omega_{p,q}\theta(x)=\Phi(\theta)\omega_{p,q}(x)$ for $x\in {\bA}^{p}\oplus {\bB}^{q}$.
Here $\omega_{p,q}$ is the renaming isomorphism from  ${\bA}^{p}\oplus {\bB}^{q}$ to ${\bA}^{q}\oplus  {\bB}^{p}=\Hom_{\Lambda^{q,p}}(\Xi^{q,p},S_2)\oplus \Hom_{\Lambda^{q,p}}(S_2,\Xi^{q,p})$ and $\Phi$ is the conjugation by this isomorphism.

Applying again our definitions we get $\Phi(F)\omega_{p,q}(\Lambda)=(S_2\oplus \Xi^{q,p})_{\Phi(\theta^{-1})}$ and $\Phi(F)\omega_{p,q}(x)=\omega_{p,q}(x)$ for $x\in {\bA}^{p}\oplus {\bB}^{q}$. Here $\omega_{p,q}(x):(S_2\oplus \Xi^{q,p})_{\Phi(\theta^{-1})}\rightarrow (S_2\oplus \Xi^{q,p})_{\Phi(\theta^{-1})}$ is the map obtained by the identification of the linear spaces $(S_2\oplus \Xi^{q,p})_{\Phi(\theta^{-1})}$ and $S_2\oplus \Xi^{q,p}$.

Let us now introduce $\phi:S_2\oplus \Xi^{q,p}\rightarrow S_2\oplus \Xi^{q,p}$ that is identical on $S_2$ and is defined on $\Xi^{q,p}=({\bA}^{q})^*\oplus {\bB}^{p}\oplus \kk$ by the equality $\phi(f,v,\lambda)=\big(f\circ \Phi(\theta), \Phi(\theta^{-1})(v),\lambda\big)$ for $(f,v,\lambda)\in ({\bA}^{q})^*\oplus {\bB}^{p}\oplus \kk$. Now, for $x=(u,w)\in {\bA}^{q}\oplus {\bB}^{p}\subset \Lambda^{q,p}$, we get
\begin{multline*}
\phi\big((f,v,\lambda)x\big)=\phi\big(0,\lambda w, f(u)\big)=\big(0, \lambda \Phi(\theta^{-1})(w),f(u)\big)\\
=(f\circ \Phi(\theta),\Phi(\theta^{-1})(v),\lambda)\Phi(\theta^{-1})(x)=\phi(f,v,\lambda)\Phi(\theta^{-1})(x),
\end{multline*}
i.e. $\phi$ induces an isomorphism $S_2\oplus \Xi^{q,p}\cong (S_2\oplus \Xi^{q,p})_{\Phi(\theta^{-1})}$. It remains to check that $\phi\circ \Phi(\theta)\omega_{p,q}(x)=\omega_{p,q}(x)\circ\phi$ for $x\in {\bA}^{p}\cup {\bB}^{q}$.
Let $e$ be the generator of $S_2$.
For $u\in {\bA}^{p}$, $w\in {\bB}^{q}$, and $(f,v,\lambda)\in ({\bA}^{q})^*\oplus {\bB}^{p}\oplus \kk$, we have
\begin{multline*}
\big(\phi\circ \Phi(\theta)\omega_{p,q}(u)\big)(e)=\phi\big(0,\Phi(\theta)\omega_{p,q}(u),0\big)\\
=\big(0,\omega_{p,q}(u),0\big)=\omega_{p,q}(u)(e)=\big(\omega_{p,q}(u)\circ\phi\big)(e),
\end{multline*}
\begin{multline*}
\big(\phi\circ \Phi(\theta)\omega_{p,q}(w)\big)(f,v,\lambda)=\big(f\Phi(\theta)\omega_{p,q}(w)\big)\phi(e)=\big(f\Phi(\theta)\omega_{p,q}(w)\big)e\\
=\omega_{p,q}(w)\big(f\Phi(\theta), \Phi(\theta^{-1})(v),\lambda\big)=\big(\omega_{p,q}(w)\circ\phi\big)(f,v,\lambda),
\end{multline*}
and thus the lemma is proved.
\end{proof}

Now we are ready to formulate one of our main results. Let us introduce the group $G_{p,q}(\kk)=\big(\GL_p(\kk)\times \GL_q(\kk)\big)/D_{p,q}(\kk)$, where $D_{p,q}(\kk)$ is the subgroup formed by the elements of the form $(\lambda\Id_{\kk^p},\lambda^{-1}\Id_{\kk^q})$ for $\lambda\in\kk^*$.
In the case $p=q$ we denote by $\Phi$ the automorphism of $G_{p,q}(\kk)$ induced by the interchanging of two copies of $\GL_p(\kk)$. This automorphism as before gives an action of $\mathbb{Z}$ on $G_{p,q}(\kk)$, and hence determines the semidirect product $G_{p,q}(\kk)\rtimes_{\Phi}\mathbb{Z}$.

\begin{theorem} Let $p\not=q$ be positive integers. Then $\TrPic(\Lambda^{p,q})\cong G_{p,q}(\kk)\times\mathbb{Z}\times\mathbb{Z}$ and $\TrPic(\Lambda^{p,p})\cong (G_{p,p}(\kk)\rtimes_{\Phi}\mathbb{Z})\times\mathbb{Z}$.
\end{theorem}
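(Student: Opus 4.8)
The plan is to assemble Corollary~\ref{grprod}, Lemma~\ref{pic} and Lemma~\ref{comm} into a description of the group structure. By Corollary~\ref{grprod} we have $\TrPic(\Lambda^{p,q})=KH$ with $K\cap H=\{\Id\}$, where $H\cong\Pic(\Lambda^{p,q})\times\mathbb Z$ is the subgroup generated by the Picard group and the shift and $K\cong\mathbb Z$ is generated by $\nu_{p,q}$ when $p\ne q$ and by $\omega_{p,p}$ when $p=q$. By Lemma~\ref{pic}, and since $\Lambda^{p,q}$ is basic, $\Pic(\Lambda^{p,q})\cong\Out(\Lambda^{p,q})\cong\big(\GL({\bA}^p)\times\GL({\bB}^q)\big)/D^{p,q}$, which under the identifications $\GL({\bA}^p)=\GL_p(\kk)$, $\GL({\bB}^q)=\GL_q(\kk)$, $D^{p,q}=D_{p,q}(\kk)$ is exactly $G_{p,q}(\kk)$; moreover in the case $p=q$ the renaming automorphism $\Phi_{p,p}$ of $\Pic(\Lambda^{p,p})$ corresponds, under this identification, to the automorphism $\Phi$ of $G_{p,p}(\kk)$ interchanging the two copies of $\GL_p(\kk)$. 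So everything reduces to pinning down the extension $KH$, i.e. the conjugation action of the generator of $K$ on $H$. Here one remark is used throughout: the shift $[1]$ is central in $\TrPic(\Lambda^{p,q})$, as every triangle autoequivalence commutes with it up to natural isomorphism; hence the generator of $K$ fixes the $\mathbb Z$-factor of $H$, and we only need its action on $\Pic(\Lambda^{p,q})$.

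Suppose $p\ne q$, so $K=\langle\nu_{p,q}\rangle$ with $\nu_{p,q}=\omega_{q,p}\omega_{p,q}$. For $F\in\Pic(\Lambda^{p,q})$, Lemma~\ref{comm} gives $\omega_{p,q}F\omega_{p,q}^{-1}\cong\Phi_{p,q}(F)$ as an autoequivalence of $\Db_{\Lambda^{q,p}}$, and a second application of Lemma~\ref{comm}, now for $\Lambda^{q,p}$, yields
\[
\nu_{p,q}F\nu_{p,q}^{-1}\cong\omega_{q,p}\Phi_{p,q}(F)\omega_{q,p}^{-1}\cong\Phi_{q,p}\Phi_{p,q}(F)=F,
\]
since $\Phi_{q,p}\Phi_{p,q}=\Id$. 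Thus $\nu_{p,q}$ centralizes $\Pic(\Lambda^{p,q})$ and the shift, hence is central in $\TrPic(\Lambda^{p,q})=KH$. Consequently the map $K\times H\to\TrPic(\Lambda^{p,q})$, $(k,h)\mapsto kh$, is a group isomorphism, and
\[
\TrPic(\Lambda^{p,q})\cong\mathbb Z\times\big(\Pic(\Lambda^{p,q})\times\mathbb Z\big)\cong G_{p,q}(\kk)\times\mathbb Z\times\mathbb Z .
\]

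Suppose now $p=q$. Then $\Lambda^{q,p}=\Lambda^{p,p}$, so $\omega_{p,p}$ is a self-equivalence of $\Db_{\Lambda^{p,p}}$ and $K=\langle\omega_{p,p}\rangle$. By Lemma~\ref{comm} conjugation by $\omega_{p,p}$ acts on $\Pic(\Lambda^{p,p})$ as the order-two automorphism $\Phi_{p,p}$, and it fixes the shift; in particular $\omega_{p,p}$ normalizes $H$. Since $K\cap H=\{\Id\}$ and $KH=\TrPic(\Lambda^{p,p})$, it follows that $\TrPic(\Lambda^{p,p})\cong H\rtimes K$, with $K\cong\mathbb Z$ acting on $H\cong\Pic(\Lambda^{p,p})\times\mathbb Z$ by $\Phi_{p,p}$ on the first factor and trivially on the second. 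As the second factor $\langle[1]\rangle$ is central and acted on trivially, it splits off as a direct summand, giving
\[
\TrPic(\Lambda^{p,p})\cong\big(\Pic(\Lambda^{p,p})\rtimes_{\Phi_{p,p}}\mathbb Z\big)\times\mathbb Z\cong\big(G_{p,p}(\kk)\rtimes_{\Phi}\mathbb Z\big)\times\mathbb Z ,
\]
where the last isomorphism uses that $\Pic(\Lambda^{p,p})\cong G_{p,p}(\kk)$ intertwines $\Phi_{p,p}$ with $\Phi$.

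The substantive work has already been carried out: Corollary~\ref{grprod} is exactly what guarantees that $K$ is honestly infinite cyclic, meets $H$ trivially, and together with $H$ exhausts $\TrPic(\Lambda^{p,q})$, while Lemma~\ref{comm} supplies the commutation of $\omega_{p,q}$ with the Picard group; the remainder is group‑theoretic bookkeeping. The only place requiring genuine care is the case $p=q$: one must check that the normalization of $\omega_{p,q}$ used in Lemma~\ref{comm} is compatible with the generator of $K$ from Corollary~\ref{grprod}, so that the twisting automorphism in the semidirect product really is $\Phi_{p,p}$ (equivalently $\Phi$), together with the routine verification that a central direct factor on which the conjugation action is trivial can be extracted from a semidirect product. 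Note also that the relation $\omega_{p,p}^2=\nu_{p,p}$ imposes no constraint here, since $\Phi_{p,p}^2=\Id$ makes $\omega_{p,p}^2$ automatically central in $H\rtimes K$.
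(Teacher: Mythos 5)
Your proposal is correct and follows exactly the paper's route: the paper's proof consists of the single remark that the theorem follows from Corollary~\ref{grprod} together with Lemmas~\ref{pic} and~\ref{comm}, and your argument simply carries out the group-theoretic assembly (centrality of $\nu_{p,q}$ via $\Phi_{q,p}\Phi_{p,q}=\Id$ when $p\neq q$, and the $\Phi_{p,p}$-twisted semidirect product with the shift splitting off when $p=q$) that the paper leaves implicit.
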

\begin{proof} The theorem follows directly from Corollary \ref{grprod} and Lemmas \ref{pic} and \ref{comm}.
\end{proof}

\section{Two-term silting complexes and derived equivalences}\label{2term}

In this section we describe all two-term silting complexes over $\Lambda^{p,q}$. By Lemma \ref{reduc}, this will allow us to describe all the algebras derived equivalent to $\Lambda^{p,q}$.
Note that the last point can be achieved using \cite[Theorem 4.7]{DUB}, but in any case to get the description in terms of quivers with relations, one has to obtain projective resolutions of the modules $\nu_{p,0}^m(\Lambda^{p,0})$ that is of the same difficulty as the direct description of  two-term silting complexes over $\Lambda^{p,q}$.

Let us fix some nonnegative integers $p,q$. Assume that $p\ge 2$. Let us introduce the sequence of $\kk$-linear spaces ${\bA}_i={\bA}^p_i$  with monomorphisms $\kappa_k=\kappa_{i,k}:{\bA}_i\hookrightarrow {\bA}_{i+1}$ ($i\ge 0$, $1\le k\le p$)  by induction.
We set $\bA_0=\kk$, $\bA_1=\oplus_{i=1}^p\kk\alpha_i\cong\kk^p$ and define $\kappa_{0,k}:\bA_0\rightarrow \bA_1$ by the equality $\kappa_{0,k}(1)=\alpha_k$ for $1\le k\le p$. Suppose that we have already defined $\bA_{m-1}$, $\bA_m$ and $\kappa_{m-1,k}:\bA_{m-1}\hookrightarrow \bA_m$ for some integer $m$ and all $1\le k\le p$. Let $\iota_m$ denote the monomorphism $$\bA_{m-1}\cong \bA_{0}\otimes\bA_{m-1}\xrightarrow{\sum\limits_{k=1}^p\kappa_{0,k}\otimes\kappa_{m-1,k}} \bA_{1}\otimes \bA_m.$$ Here and further we write simply $\otimes$ instead of $\otimes_\kk$. We set $\bA_{m+1}=\Coker\iota_m$ and denote by $\pi_m:\bA_{1}\otimes \bA_m\twoheadrightarrow \bA_{m+1}$ the canonical projection. Now we define $\kappa_{m,k}$ for $1\le k\le p$ as the composition $$\bA_{m}\cong \bA_{0}\otimes\bA_m\xrightarrow{\kappa_{0,k}\otimes\Id_{\bA_{m}}} \bA_{1}\otimes \bA_m\xrightarrow{\pi_m}\bA_{m+1}.$$ For convenience, we also set $\bA_{-1}=0$. Let us now introduce $a_m=a_{p,m}:=\dim_\kk\bA_m$. Then the numbers $a_k$ satisfy the recursive formula $a_{m+1}=pa_m-a_{m-1}$ and induction argument shows that $a_m^2+a_{m-1}^2-pa_{m-1}a_m=1$. Moreover, it is not difficult to show that $\frac{a_m}{a_{m-1}}$ is a decreasing sequence with the limit $\frac{p+\sqrt{p^2-4}}{2}$.

Let $C_m=C_m^{p,q}$ ($m\ge 0$) be the two-term $\Lambda^{p,q}$-complex $$\bA_{m-1}\otimes P_1\xrightarrow{\sum\limits_{k=1}^p\kappa_k\otimes\alpha_k}  \bA_m\otimes P_2$$ concentrated in degrees $-1$ and $0$. Note that $C_0=P_2$, $C_1=\left(P_1\xrightarrow{\tiny\begin{pmatrix}\alpha_1\\\vdots\\\alpha_p\end{pmatrix}} P_2^p\right)=\mu^+_{P_2}(P_1)$, and hence $C_0\oplus C_1$ is a silting $\Lambda^{p,q}$-complex. Moreover, it is not difficult to see that $C_0\oplus C_1$ is a tilting complex. We set also $C_{-1}:=P_1$ for convenience.

Let us now introduce the algebras $\Lambda^{p,q}_m$ ($m\ge 0$) in the following way. The algebra $\Lambda^{p,q}_m$ has two primitive orthogonal idempotents $e_1$ and $e_2$ such that $e_1+e_2=1$ and its bimodule structure over the semisimple algebra generated by $e_1$ and $e_2$ is defined by the equalities
\begin{multline*}
e_1\Lambda^{p,q}_me_1=\kk e_1\bigoplus \bA_{m+1}\otimes\bB\otimes \bA_m^*,\ e_1\Lambda^{p,q}_me_2=\bA_{m}\otimes\bB\otimes \bA_m^*\\
e_2\Lambda^{p,q}_me_1=\bA_1\bigoplus \bA_{m+1}\otimes\bB\otimes \bA_{m-1}^*,\ e_2\Lambda^{p,q}_me_2=\kk e_2\bigoplus \bA_{m}\otimes\bB\otimes \bA_{m-1}^*,
\end{multline*}
where, as before, ${\bB}=\oplus_{i=1}^q\kk\beta_i$. The products that do not follow from the $\kk e_1\oplus\kk e_2$-bimodule structure are all zero except the products induced by the maps
\begin{multline*}
\bA_1\otimes(\bA_{m}\otimes\bB\otimes \bA_i^*)\xrightarrow{\pi_m\otimes\Id_{\bB\otimes \bA_i^*}}\bA_{m+1}\otimes\bB\otimes \bA_i^*\,\,\,(i=m,m-1)\mbox{ and}\\
(\bA_{i}\otimes\bB\otimes \bA_m^*)\otimes\bA_1\xrightarrow{\Id_{\bA_i\otimes\bB}\otimes\iota_m^*}\bA_{i}\otimes\bB\otimes \bA_{m-1}^*\,\,\,(i=m,m+1).
\end{multline*}
Here we identify $\bA_m^*\otimes\bA_1$ with $(\bA_1\otimes\bA_m)^*$ via the canonical isomorphism $\bA_m^*\otimes\bA_1^*\cong(\bA_1\otimes\bA_m)^*$ and the isomorphism $\bA_1\cong \bA_1^*$ that sends $\alpha_i$ to $\alpha_i^*$, where $\alpha_1^*,\dots,\alpha_p^*$ is the basis dual to $\alpha_1,\dots,\alpha_p$. Let us note that $\Lambda^{p,q}_m$ is the path algebra of a quiver with vertices $1$ and $2$, $p$ arrows from $1$ to $2$ associated with some basis of $\bA_1$ and $qa_m^2$ arrows from $2$ to $1$ associated with some basis of $\bA_{m}\otimes\bB\otimes \bA_m^*$ modulo the ideal generated by the images of the maps
\begin{multline*}
\bA_{m-1}\otimes\bB\otimes \bA_m^*\xrightarrow{\iota_m\otimes\Id_{\bB\otimes \bA_m^*}}\bA_1\otimes(\bA_{m}\otimes\bB\otimes \bA_m^*)\mbox{ and}\\
\bA_{m}\otimes\bB\otimes \bA_{m+1}^*\xrightarrow{\Id_{\bA_m\otimes\bB}\otimes \pi_m^*}(\bA_{m}\otimes\bB\otimes \bA_m^*)\otimes\bA_1.
\end{multline*}
In particular, the algebra $\Lambda^{p,q}_m$ is quadratic for any $m\ge 0$. Indeed, since $\iota_m^*$ and $\pi_m$ are surjective, it is clear that $J_{\Lambda^{p,q}_m}/J_{\Lambda^{p,q}_m}^2\cong (\bA_{m}\otimes\bB\otimes \bA_{m}^*)\oplus \bA_1$ and it remains to show that
$
\bA_m^*\otimes \Im\iota_m+\Im\pi_m^*\otimes \bA_m=\bA_m^*\otimes\bA_1\otimes\bA_m.
$
Since
$$\dim_\kk(\bA_m^*\otimes \bA_{m-1})+\dim_\kk(\bA_{m+1}^*\otimes \bA_{m})=\dim_\kk(\bA_{m}^*\otimes \bA_1\otimes \bA_{m}),$$ the required equality is equivalent to the fact that $(\iota_m^*\otimes\Id_{\bA_m})(\Id_{\bA_m^*}\otimes\iota_m)$ is an isomorphism and to the fact that $(\Id_{\bA_m^*}\otimes\pi_m)(\pi_m^*\otimes\Id_{\bA_m})$ is an isomorphism.
Thus, we can proceed by induction on $m$ using the equality
$$(\iota_m^*\otimes\Id_{\bA_m})(\Id_{\bA_m^*}\otimes\iota_m)=(\Id_{\bA_{m-1}^*}\otimes\pi_{m-1})(\pi_{m-1}^*\otimes\Id_{\bA_{m-1}}).$$
Since $(\iota_1^*\otimes\Id_{\bA_1})(\Id_{\bA_1^*}\otimes\iota_1)$ is an isomorphism, we are done. Note that $\Lambda^{p,q}_0=\Lambda^{p,q}=\End_{\Kb_{\Lambda^{p,q}}}(C_{-1}\oplus C_0)$.

\begin{lemma} For any $m\ge 0$, the complex $C_{m-1}\oplus C_m$ is a tilting $\Lambda^{p,q}$-complex such that $\End_{\Kb_{\Lambda^{p,q}}}(C_{m-1}\oplus C_m)\cong\Lambda^{p,q}_m$.
\end{lemma}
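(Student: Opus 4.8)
The plan is to argue by induction on $m$. The base case $m=0$ is immediate: $C_{-1}\oplus C_0=P_1\oplus P_2=\Lambda^{p,q}$ is a tilting complex, and $\Lambda^{p,q}_0=\Lambda^{p,q}=\End_{\Kb_{\Lambda^{p,q}}}(C_{-1}\oplus C_0)$ by the definition of $\Lambda^{p,q}_m$ (with $\bA_{-1}=0$, $\bA_0=\kk$, $\bA_1=\bA$), as already remarked. For the inductive step, assuming the statement for $m$, the key structural point is that $C_m\oplus C_{m+1}$ is obtained from $C_{m-1}\oplus C_m$ by the mutation $\mu^+$ at the summand $C_m$, that is, $C_{m+1}\cong\mu^+_{C_m}(C_{m-1})$; by the mutation results of \cite{AIR} recalled above this gives at once that $C_m\oplus C_{m+1}$ is a basic two-term silting complex with indecomposable summands $C_m$ and $C_{m+1}$, so in particular it is presilting and $\thick(C_m\oplus C_{m+1})=\Kbp_{\Lambda^{p,q}}$. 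For $m=0$ this structural point is exactly the remark $C_1=\mu^+_{P_2}(P_1)$ made above.

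For $m\ge1$ I would exhibit a right $\add(C_m)$-approximation $C_{m-1}\to C_m^p$, i.e. a chain map $C_{m-1}\to\bA\otimes C_m$, whose components in degrees $-1$ and $0$ are built, through $\Id_{P_1}$ and $\Id_{P_2}$, from the monomorphisms $\iota_{m-1}\colon\bA_{m-2}\hookrightarrow\bA\otimes\bA_{m-1}$ and $\iota_m\colon\bA_{m-1}\hookrightarrow\bA\otimes\bA_m$ together with the recursive structure of the $\kappa$'s. Since $\iota_{m-1}$ and $\iota_m$ are split monomorphisms of vector spaces with cokernels $\bA_m$ and $\bA_{m+1}$ (along $\pi_{m-1}$ and $\pi_m$), this map is a termwise split monomorphism and fits into a triangle $C_{m-1}\to\bA\otimes C_m\to Q\to C_{m-1}[1]$ with $Q$ a two-term complex whose terms are $\bA_m\otimes P_1$ and $\bA_{m+1}\otimes P_2$, and one then identifies $Q$ with $C_{m+1}$. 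That the map is indeed a chain map, that $Q\cong C_{m+1}$, and that the approximation is minimal with the integer $n$ of the mutation recipe equal to $p$, all rest on a bundle of coherence identities among the $\kappa$'s, $\pi$'s and $\iota$'s --- the cleanest one being that the flip of $\bA\otimes\bA$ descends to $\bA_2=\Coker\iota_1$, with iterated analogues for higher $m$ --- which I would prove by a secondary induction straight from the inductive construction of the $\bA_i$. That the radical top of $\Hom_{\Kb_{\Lambda^{p,q}}}(C_{m-1},C_m)$ as a left $\End_{\Kb_{\Lambda^{p,q}}}(C_m)$-module is $p$-dimensional, with the $p$ components of the approximation as a basis, is then read off from the inductive description of $\End_{\Kb_{\Lambda^{p,q}}}(C_m)$ and of $\Hom_{\Kb_{\Lambda^{p,q}}}(C_{m-1},C_m)$ as pieces of $\Lambda^{p,q}_m$.

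Once $C_m\oplus C_{m+1}$ is known to be silting, it remains only to check it has no negative self-extensions, and here there is a short uniform argument: each $C_i$ with $i\ge0$ is a radical two-term complex whose degree $-1$ term is a sum of copies of $P_1$ and whose degree $0$ term is a sum of copies of $P_2$, so a chain map $C_i\to C_j[-1]$ has a single possibly nonzero component $(C_i)_0=\bA_i\otimes P_2\to(C_j)_{-1}=\bA_{j-1}\otimes P_1$, an element $\sum_l\sigma_l\otimes\beta_l$ of $\Hom_\kk(\bA_i,\bA_{j-1})\otimes\bB$; the chain-map equation forces $\sum_{k,l}(\kappa_{j-1,k}\circ\sigma_l)\otimes\alpha_k\beta_l=0$, and since the products $\alpha_k\beta_l$ are linearly independent in $\End_{\Lambda^{p,q}}(P_2)$ while the map $(\kappa_{j-1,1},\dots,\kappa_{j-1,p})$ is injective, each $\sigma_l$, hence the whole component, must vanish. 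Therefore $\Hom_{\Kb_{\Lambda^{p,q}}}\big(C_i,C_j[-1]\big)=0$ for all $i,j\ge0$, and the silting complex $C_m\oplus C_{m+1}$ is tilting.

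The final and most laborious task is to identify $\End_{\Kb_{\Lambda^{p,q}}}(C_m\oplus C_{m+1})$ with $\Lambda^{p,q}_{m+1}$. I would compute each of the four spaces $\End_{\Kb_{\Lambda^{p,q}}}(C_m)$, $\End_{\Kb_{\Lambda^{p,q}}}(C_{m+1})$, $\Hom_{\Kb_{\Lambda^{p,q}}}(C_m,C_{m+1})$ and $\Hom_{\Kb_{\Lambda^{p,q}}}(C_{m+1},C_m)$ as chain maps modulo homotopies; the relations $\beta_j\alpha_i=0$ and the injectivity of the $\iota$'s keep these computations finite and return exactly the four graded pieces $e_i\Lambda^{p,q}_{m+1}e_j$ of the presentation given before the lemma, with the summand $\bA$ and the spaces $\bA_r\otimes\bB\otimes\bA_s^{*}$ realized by explicit families of chain maps assembled from the $\alpha_i$, $\beta_j$ and the $\kappa/\pi$-maps (for instance the components of the approximation of the previous paragraph realize the $\bA$-part of the arrows out of the $C_m$-summand). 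One then checks that composition of chain maps matches the multiplication of $\Lambda^{p,q}_{m+1}$ and, crucially, that the defining relations of $\Lambda^{p,q}_{m+1}$ --- the images of $\iota_{m+1}\otimes\Id$ and of $\Id\otimes\pi_{m+1}^{*}$ --- are precisely the homotopy relations among these maps. I expect this bookkeeping of tensor factors, dual spaces and the various $\kappa/\pi/\iota$-coherence identities (already used in the previous steps) to be the real obstacle; the presilting, generation, and vanishing of negative self-extensions are comparatively soft.
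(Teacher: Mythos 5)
Your overall route is the same as the paper's: induct on $m$, realize $C_{m+1}$ as $\mu^+_{C_m}(C_{m-1})$ so that $C_m\oplus C_{m+1}$ is silting, kill the negative-degree Hom's using the relations $\beta_j\alpha_i=0$ (the paper phrases this as injectivity of the nonzero component of the differential), and identify the endomorphism algebra by computing chain maps modulo homotopies. The parts you actually carry out --- the base case and the uniform vanishing of $\Hom_{\Kb_{\Lambda^{p,q}}}(C_i,C_j[-1])$ --- are correct.

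The gap is that the two items constituting the real content of the lemma are only announced, not proved. First, the mutation step: you never verify that your proposed maps $C_{m-1}\to\bA\otimes C_m$ are chain maps, that they form a minimal right $\add(C_m)$-approximation, or that the cone is $C_{m+1}$; all of this is deferred to unproven ``coherence identities''. These are not routine: the chain-map property for the obvious candidates $(\kappa_{m-2,k}\otimes\Id_{P_1},\kappa_{m-1,k}\otimes\Id_{P_2})$ requires $\kappa_{m-1,k}\kappa_{m-2,l}=\kappa_{m-1,l}\kappa_{m-2,k}$, which with the stated definitions fails on the nose already in $\bA_2$ (for $k\ne l$ one has $\pi_1(\alpha_k\otimes\alpha_l)\ne\pi_1(\alpha_l\otimes\alpha_k)$), so the corrected identifications of $\bA_{m+1}$ of exactly the type you allude to (the descended flip and its iterates) must actually be set up and proved before the cone can be identified with $C_{m+1}$ and before minimality with $n=p$ can be claimed. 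Second, and more importantly, the isomorphism $\End_{\Kb_{\Lambda^{p,q}}}(C_{m-1}\oplus C_m)\cong\Lambda^{p,q}_m$ is precisely where the paper's proof does its work, and your proposal replaces it by a declaration of intent. The paper's computation rests on a specific device absent from your sketch: since $C_{m-1}\oplus C_m$ is silting by the inductive hypothesis, every chain map $C_{m-1}\to C_m[1]$ is null homotopic, which gives the count $a_{m-2}a_{m-1}+a_{m-1}a_m-pa_{m-2}a_m=p$ of degree-zero (identity-type) maps up to homotopy; this is then combined with the parametrization of the socle-type maps $\bigl(0,\sum_{i,j} f_{i,j}\otimes\alpha_i\beta_j\bigr)$ by $\bA_{m+1}\otimes\bB\otimes\bA_{m-1}^*$, $\bA_m\otimes\bB\otimes\bA_m^*$ and $\bA_{t+1}\otimes\bB\otimes\bA_t^*$, and with the identification of the two nonzero products with $\pi_m$ and $\iota_m^*$, which is exactly the presentation of $\Lambda^{p,q}_m$. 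Until these computations (or equivalents) are carried out, what you have is a plan for the paper's proof rather than a proof.
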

\begin{proof} Let us set $\Lambda=\Lambda^{p,q}$.
We will proceed by induction on $m$. Suppose that $C_{m-1}\oplus C_m$ is a tilting complex.
Note that any map from $\bA_{m-2}\otimes P_1$ to $\bA_{m-1}\otimes P_1$ is of the form $f\otimes\Id_{P_1}$ for some $f\in\Hom_\kk(\bA_{m-2},\bA_{m-1})$ while any map from $\bA_{m-1}\otimes P_2$ to $\bA_{m}\otimes P_2$ is a sum of a map of the form $f\otimes\Id_{P_2}$ and of maps of the form $f\otimes \alpha_i\beta_j$, where $f\in\Hom_\kk(\bA_{m-1},\bA_{m})$, $1\le i\le p$ and $1\le j\le q$. The maps $f\otimes \alpha_i\beta_j$ automatically give maps from $\Hom_{\Cb_{\Lambda}}(C_{m-1},C_m[-1])$ while the maps of the form $(f_1\otimes\Id_{P_1},f_2\otimes\Id_{P_2})$ have to give $pa_{m-2}a_m$ null homotopic elements of $\Hom_{\Cb_{\Lambda}}(C_{m-1},C_m[1])$. Thus, there are $a_{m-2}a_{m-1}+a_{m-1}a_m-pa_{m-2}a_m=p$ linearly independent maps of the form $(f_1\otimes\Id_{P_1},f_2\otimes\Id_{P_2})$ from $C_{m-1}$ to $C_m$ in $\Kb_{\Lambda}$.
Then it is easy to see that these maps are linear combinations of the maps $(\kappa_k\otimes\Id_{P_1},\kappa_k\otimes\Id_{P_2})$ ($1\le k\le p$). Analogously, one can show that there are no nonzero elements of the form $(f_1\otimes\Id_{P_1},f_2\otimes\Id_{P_2})$ in $\Hom_{\Cb_{\Lambda}}(C_{m},C_{m-1})$.

Now the map $(0,\sum\limits_{i=1}^p\sum\limits_{j=1}^qf_{i,j}\otimes \alpha_i\beta_j)\in\Hom_{\Cb_{\Lambda}}(C_{m-1},C_m[-1])$ is nonzero in $\Kb_{\Lambda}$ if and only if its image does not lie in the image of
$$\sum\limits_{k=1}^p\kappa_k\otimes\alpha_k:\bA_{m-1}\otimes P_1\rightarrow  \bA_m\otimes P_2.$$
Since the image of $\sum\limits_{i=1}^p\sum\limits_{j=1}^qf_{i,j}\otimes \alpha_i\beta_j$ is a submodule of the socle of $\bA_m\otimes P_2$, nonzero elements of $\Hom_{\Kb_{\Lambda}}(C_{m-1},C_m)$ that can be represented by a map of the form $(0,\sum\limits_{i=1}^p\sum\limits_{j=1}^qf_{i,j}\otimes \alpha_i\beta_j)$ correspond to  maps from $\bA_{m-1}\otimes P_2$ to
\begin{multline*}
\Coker\left(\bA_{m-1}\otimes\bA_0\otimes\bB\otimes S_2\xrightarrow{\sum\limits_{k=1}^p\kappa_{m-1,k}\otimes\kappa_{0,k}\otimes\Id_{\bB\otimes S_2}}\bA_m\otimes \bA_1\otimes \bB\otimes S_2\right)\\
\cong \bA_{m+1}\otimes\bB\otimes S_2,
\end{multline*}
i.e. can be naturally parametrized by the elements of $\bA_{m+1}\otimes\bB\otimes \bA_{m-1}^*$.
Analogously, $\Hom_{\Kb_{\Lambda}}(C_{m},C_{m-1})\cong \bA_{m}\otimes\bB\otimes \bA_{m}^*$.

In the same manner one can show that, for $t=m-1,m$, the elements of $\End_{\Kb_{\Lambda}}(C_{t})$ are linear combinations of $\Id_{C_t}$ and maps that can be represented by a map of the form $(0,\sum\limits_{i=1}^p\sum\limits_{j=1}^qf_{i,j}\otimes \alpha_i\beta_j)$ with $f_{i,j}\in\End_\kk(\bA_t)$ and that nonzero maps of the second type are in one to one correspondence with $\bA_{t+1}\otimes\bB\otimes \bA_{t}^*$. Thus, we have isomorphisms $\End_{\Kb_{\Lambda}}(C_{t})\cong \kk\oplus (\bA_{t+1}\otimes\bB\otimes \bA_{t}^*)$. Sending  $(\kappa_k\otimes\Id_{P_1},\kappa_k\otimes\Id_{P_2})$ ($1\le k\le p$) to $\alpha_k$, we get also the isomorphism $\Hom_{\Kb_{\Lambda}}(C_{m-1},C_{m})\cong \bA_1\oplus (\bA_{m+1}\otimes\bB\otimes \bA_{m-1}^*)$. It is clear that all the products not involving $\Id_{C_{m-1}}$ and $\Id_{C_m}$ are zero except the products
\begin{multline*}
\mu_1:\bA_{m}\otimes\bB\otimes \bA_{m}^*\times \bA_1\rightarrow \bA_{m}\otimes\bB\otimes \bA_{m-1}^*,\\
\mu_2:\bA_1\times \bA_{m}\otimes\bB\otimes \bA_{m}^*\rightarrow \bA_{m+1}\otimes\bB\otimes \bA_{m}^*.
\end{multline*}
By our definitions, we have $\mu_1(u\otimes v\otimes f,\alpha_k)=u\otimes v\otimes f\kappa_k$ and $\mu_2(u\otimes v\otimes f,\alpha_k)=\kappa_k(u)\otimes v\otimes f$ for $u\in\bA_m$, $v\in\bB$, $f\in\bA_{m}^*$ and $1\le k\le p$. It remains to note that the map from $\bA_{m}^*\otimes \bA_1$ to $\bA_{m-1}^*$  that sends $f\otimes \alpha_k$ to $f\kappa_k$ is exactly $\iota_m^*$ while the map from $\bA_1\otimes\bA_{m} $ to $\bA_{m+1}$  that sends $\alpha_k\otimes u$ to $\kappa_k(u)$ is exactly $\pi_m$.

Thus, we have proved that $\End_{\Kb_{\Lambda}}(C_{m-1}\oplus C_m)\cong\Lambda^{p,q}_m$, where the arrows from $C_{m-1}$ to $C_m$ correspond to the maps $(\kappa_k\otimes\Id_{P_1},\kappa_k\otimes\Id_{P_2})$ ($1\le k\le p$). Then $\mu^+_{C_m}(C_{m-1})$ is the cone of the map
$$
C_{m-1}\xrightarrow{\tiny\begin{pmatrix}(\kappa_1\otimes\Id_{P_1},\kappa_1\otimes\Id_{P_2})\\\vdots\\(\kappa_p\otimes\Id_{P_1},\kappa_p\otimes\Id_{P_2})\end{pmatrix}} (C_{m})^p
$$
that in turn is isomorphic to $C_{m+1}$. Hence, $C_m\oplus C_{m+1}$ is a two-term silting complex. Since the nonzero component of its differential is injective, it is easy to see that $C_m\oplus C_{m+1}$ is tilting and the induction step is finished.
\end{proof}

If $q\ge 2$, then applying the functor $\Hom_{\Lambda^{q,p}}(-,\Lambda^{q,p}):\Cb_{\Lambda^{q,p}}\rightarrow \Cb_{\Lambda^{p,q}}$ to the tilting complex $C_m^{q,p}[-1]$ ($m\ge -1$), we obtain the tilting $\Lambda^{p,q}$-complex
$$
\left(\bA_{m}^{q,p}\right)^*\otimes P_2\xrightarrow{\sum\limits_{k=1}^q\kappa_k^*\otimes\beta_k}  \left(\bA_{m-1}^{q,p}\right)^*\otimes P_1
$$
that we will denote by $\overline C_m=\overline C_m^{p,q}$.
By \cite[Proposition 9.1]{Ric} the complex $\overline C_{m-1}\oplus \overline C_{m}$ realizes a derived equivalence between $\Lambda^{p,q}$ and $(\Lambda^{q,p}_m)^{\rm op}$.

Let $p\ge 2$ again. Suppose that we have a two-term complex $$C=\left(P_1^a\xrightarrow{\tiny\begin{pmatrix}w_{1,1}&\cdots&w_{1,a}\\\vdots&\ddots&\vdots\\w_{b,1}&\cdots&w_{b,a}\end{pmatrix}}P_2^b\right)$$ with $w_{i,j}\in \Hom_{\Lambda^{p,q}}(P_1,P_2)=e_2\Lambda^{p,q}e_1.$ We define 
$$C^*:=\left(P_1^b\xrightarrow{\tiny\begin{pmatrix}w_{1,1}&\cdots&w_{b,1}\\\vdots&\ddots&\vdots\\w_{1,a}&\cdots&w_{b,a}\end{pmatrix}}P_2^a\right).$$
It is easy to see that $(C^*)^*=C$ and that $C^*$ is indecomposable, radical or presilting if and only if $C$ satisfies the same condition. This assertion is based on the fact that to verify the presilting condition for $C$ and $C^*$ one has to deal only with identity morphisms on $P_1$ and $P_2$ and morphisms from $P_1$ to $P_2$.
On the other hand, the same assertion for the pretilting property fails. Indeed, $C_0^*\oplus C_1^*=P_1[1]\oplus \omega_{q,p}(P_1)$ is not a tilting complex if $q>0$. In fact, one has $\End_{\Kb_{\Lambda^{p,q}}}(C_0^*)=\End_{\Kb_{\Lambda^{p,q}}}(C_1^*)=\kk$, $\Hom_{\Kb_{\Lambda^{p,q}}}(C_0^*,C_1^*[i])=0$ for all $i\in\mathbb{Z}$, $\Hom_{\Kb_{\Lambda^{p,q}}}(C_1^*,C_0^*)=\kk^p$ and $\Hom_{\Kb_{\Lambda^{p,q}}}(C_1^*,C_0^*[-1])=\kk^q$, and hence $C_0^*\oplus C_1^*$ is related to a derived equivalence between the algebra $\Lambda^{p,q}$ and the {\it graded $(p+q)$-Kronecker} algebra $\delta_{p,q}$ with $p$ arrows of degree $0$ and $q$ arrows of degree $-1$ (see \cite{LY}). On the other hand, for $m\ge 1$, the radical representative of $\omega_{q,p}^{-1}(C_m^*)$ coincides as a graded $\Lambda^{q,p}$-module with $\overline C_{m-2}^{q,p}$. One can see this, for example, representing
$C_m^*$ as a cocone of a morphism from the complex $(P_1^p\rightarrow P_2)^{a_{m-1}}$ to $P_1^{a_{m-2}}[1]$ and noting that $\omega_{q,p}^{-1}(C_m^*)$ is a cocone of a morphism from $P_1^{a_{m-1}}$ to $P_2^{a_{m-2}}\rightarrow P_1^{pa_{m-2}}$.
Since an indecomposable two-term presilting complex is determined by its $g$-vector, we have $C_m^*=\omega_{q,p}\Hom_{\Lambda^{p,q}}(C_{m-2},\Lambda^{p,q})$ for $m\ge 1$, and, in particular, $(C_{m-1}\oplus C_m)^*$ realizes a derived equivalence between $\Lambda^{p,q}$ and $(\Lambda^{p,q}_{m-2})^{\rm op}$ for $m\ge 2$.
Note that $C_m^*$ is the complex
$$
\bA_{m}^*\otimes P_1\xrightarrow{\sum\limits_{k=1}^p\kappa_k^*\otimes\alpha_k}  \bA_{m-1}^*\otimes P_2.
$$

Finally, suppose again that $q\ge 2$. Note that the complex
$$
\bA_{m-1}\otimes P_2\xrightarrow{\sum\limits_{k=1}^q\kappa_k\otimes\beta_k}  \bA_{m}\otimes P_1
$$
equals $\Hom_{\Lambda^{q,p}}\big((C_m^{q,p})^*,\Lambda^{q,p}\big)[1]$. We will denote this complex by $\overline C_m^*=(\overline C_m^{p,q})^*$. By \cite[Proposition 9.1]{Ric} the complex $\overline C_{m-1}^*\oplus \overline C_m^*$ realizes a derived equivalence between $\Lambda^{p,q}$ and $\Lambda^{p,q}_{m-2}$ for $m\ge 2$ while $\overline C_{0}^*\oplus \overline C_1^*\cong P_1\oplus \omega_{p,q}^{-1}(P_1[1])$ is a two-term silting complex related to a derived equivalence between the algebra $\Lambda^{p,q}$ and the graded $(p+q)$-Kronecker algebra $\delta_{q,p}$ with $q$ arrows of degree $0$ and $p$ arrows of degree $-1$.

Let us define the set
$$
\stilt_{p,q}=\begin{cases}\{ C_{m-1}^{p,q}\oplus C_m^{p,q}\}_{m\ge 0}\cup\{(C_{m-1}^{p,q}\oplus C_m^{p,q})^*\}_{m\ge 1},&\mbox{ if $p\ge 2$},\\
\{\Lambda,P_2\oplus(P_1\rightarrow P_2),(P_1\rightarrow P_2)\oplus P_1[1]\},&\mbox{ if $p=1$},\\
\{\Lambda,P_2\oplus P_1[1]\},&\mbox{ if $p=0$}.
\end{cases}
$$
We also will denote the set $\{\Hom_{\Lambda^{q,p}}(C,\Lambda^{q,p})\mid C[1]\in\stilt_{q,p}\}$ by $\overline\stilt_{q,p}$.
Now we can formulate and prove our second main result.

\begin{theorem} Let $p>0$ and $q$ be integers. Then the set of basic silting complexes is $\stilt_{p,q}\cup\,\overline\stilt_{q,p}$. All of these complexes are tilting except for the complexes $P_1[1]\oplus \omega_{q,p}(P_1)$ and $P_1\oplus \omega_{p,q}^{-1}(P_1[1])$. In particular, the derived equivalence class of $\Lambda^{p,q}$ is the set
$$
[\Lambda^{p,q}]=\begin{cases}
\{\Lambda^{p,q}\},&\mbox{ if $q=0$ or $p=q=1$},\\
\{\Lambda^{p,q}\}\cup\{\Lambda^{q,p}_m,(\Lambda^{q,p}_m)^{\rm op}\}_{m\ge 0},&\mbox{ if $q>p=1$},\\
\{\Lambda^{q,p}\}\cup\{\Lambda^{p,q}_m,(\Lambda^{p,q}_m)^{\rm op}\}_{m\ge 0},&\mbox{ if $p>q=1$},\\
\{\Lambda^{p,q}\}\cup\{\Lambda^{p,q}_m,(\Lambda^{p,q}_m)^{\rm op}\}_{m\ge 1},&\mbox{ if $p=q\ge 2$},\\
\{\Lambda^{p,q}_m,(\Lambda^{p,q}_m)^{\rm op},\Lambda^{q,p}_m,(\Lambda^{q,p}_m)^{\rm op}\}_{m\ge 0},&\mbox{ if $p,q\ge 2$ and $p\not=q$}.\\
\end{cases}
$$
\end{theorem}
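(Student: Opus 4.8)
The plan is to reduce everything to a classification of the indecomposable two‑term presilting complexes over $\Lambda=\Lambda^{p,q}$, and to carry that classification out by comparing two‑term complexes over $\Lambda^{p,q}$ with representations of the Kronecker algebras $\Lambda^{p,0}$ and $\Lambda^{q,0}$. First, by \cite{Ric} every algebra derived equivalent to $\Lambda^{p,q}$ is $\End_{\Kb_{\Lambda^{p,q}}}(X)$ for some tilting complex $X$, and by Lemma \ref{reduc} a suitable power of the Serre functor $\nu_{p,q}$ — an autoequivalence, hence innocuous for endomorphism algebras — makes $X$ two‑term. So once the set of two‑term silting complexes is known, $[\Lambda^{p,q}]$ is obtained by collecting the endomorphism algebras of the tilting ones; the algebras $\Lambda^{p,q}_m$, their opposites (via the $*$‑operation), and the $q\leftrightarrow p$ variants (via $\Hom_{\Lambda^{q,p}}(-,\Lambda^{q,p})$) have already been computed in this section, and for $p=q$ one uses $\Lambda^{p,q}\cong\Lambda^{q,p}$, so the displayed formula for $[\Lambda^{p,q}]$ will follow after deleting the two non‑tilting complexes and identifying repetitions. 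The case $q=0$ (the hereditary Kronecker algebra) is handled with \cite{Hap} and \cite[Theorem 4.3]{MY}, and $p=1$ or $q=1$ are the degenerate cases of the argument below where one of the two Kronecker quivers has finite type; so assume $p,q\ge 2$.

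\textbf{Comparison with Kronecker quivers.} Since $\bA=\Hom_{\Lambda^{p,q}}(P_1,P_2)$, writing a radical two‑term complex of the form $X=(P_1^a\xrightarrow{w}P_2^d)$ as $w=\sum_{i=1}^p\alpha_i\otimes w^{(i)}$ identifies it (up to isomorphism in $\Kb$) with the representation $V_w=(\kk^a,\kk^d;w^{(1)},\dots,w^{(p)})$ of the $p$‑Kronecker quiver of dimension vector $(a,d)$. The crucial — and essentially the only technical — point is the natural isomorphism
$$\Hom_{\Kb_{\Lambda^{p,q}}}(X,X'[1])\ \cong\ \mathrm{Ext}^1_{\Lambda^{p,0}}(V_w,V_{w'})$$
for two complexes of this shape. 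I would prove it by computing $\Hom_{\Cb}(X,X'[1])$ together with the subspace of null‑homotopic maps directly; the key observation is that the relations $\beta_j\alpha_i=0$ give $(\bA\bB)\bA=0$, so the $e_2\Lambda e_2$‑component of a map $P_2^d\to P_2^{d'}$ commutes trivially with the differentials and never enters a null‑homotopy, after which the dimension count collapses to $\dim\Hom_{\Lambda^{p,0}}(V_w,V_{w'})-\langle V_w,V_{w'}\rangle$, which equals $\dim\mathrm{Ext}^1_{\Lambda^{p,0}}$ because $\Lambda^{p,0}$ is hereditary. Dually, complexes $(P_2^a\xrightarrow{w}P_1^d)$ correspond to representations of the $q$‑Kronecker quiver with the analogous identification (this is also the $\Hom_{\Lambda^{q,p}}(-,\Lambda^{q,p})$‑image of the previous case). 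Consequently such an $X$, and more generally a direct sum of two such complexes, is presilting iff the corresponding Kronecker representation is rigid, and $X$ is indecomposable iff the representation is.

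\textbf{Classification of two‑term silting complexes.} By the ``no common indecomposable summand in the two nonzero degrees'' fact recalled before Lemma \ref{P1}, every indecomposable radical two‑term presilting complex over $\Lambda^{p,q}$ is $P_1,P_2,P_1[1],P_2[1]$, or of one of the two shapes above. By the comparison, the latter two families are indexed by the indecomposable rigid $\Lambda^{p,0}$‑, resp.\ $\Lambda^{q,0}$‑modules, i.e.\ the exceptional (preprojective or preinjective) ones, whose dimension vectors are the positive real Schur roots $(a_{p,m-1},a_{p,m})$ and $(a_{p,m},a_{p,m-1})$, $m\ge 0$; for $m\ge 1$ these are precisely $C_m,C^*_m$, resp.\ $\overline C_m,\overline C^*_m$. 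A two‑term silting complex is a sum of two of these whose $g$‑vectors form a $\mathbb Z$‑basis of $K_0(\Lambda^{p,q})\cong\mathbb Z^2$. Running through the possibilities: a pair of $(P_1^\bullet\to P_2^\bullet)$‑complexes glues to a silting complex iff the associated $p$‑Kronecker representations form a tilting module, and the tilting modules over the $p$‑Kronecker algebra are exactly the neighbouring pairs $(a_{p,m-1},a_{p,m})\oplus(a_{p,m},a_{p,m+1})$ and $(a_{p,m+1},a_{p,m})\oplus(a_{p,m},a_{p,m-1})$ ($m\ge 0$), which reproduce exactly $\stilt_{p,q}\setminus\{\Lambda\}$; symmetrically the $(P_2^\bullet\to P_1^\bullet)$‑pairs reproduce $\overline\stilt_{q,p}\setminus\{\Lambda[1]\}$; a ``mixed'' pair $X_0\oplus X_1$ with $X_0=(P_1^{a_0}\to P_2^{d_0})$ and $X_1=(P_2^{a_1}\to P_1^{d_1})$ has $\Hom_{\Kb}(X_0,X_1[1])\cong\Hom_{\Lambda^{p,q}}(P_1^{a_0},P_1^{d_1})$ — again because $\bB\bA=0$ annihilates all homotopies — which is nonzero unless $a_0=0$ or $d_1=0$, ruling such pairs out; and among pairs involving $P_1,P_2,P_1[1],P_2[1]$ only $\Lambda$, $\Lambda[1]$, $C_0\oplus C_1=P_2\oplus C_1$, $C_0^*\oplus C_1^*=P_1[1]\oplus C_1^*$, $\overline C_0\oplus\overline C_1=P_2[1]\oplus\overline C_1$ and $\overline C_0^*\oplus\overline C_1^*=P_1\oplus\overline C_1^*$ satisfy the determinant condition. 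Altogether the two‑term silting complexes are exactly $\stilt_{p,q}\cup\overline\stilt_{q,p}$.

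\textbf{Tilting vs.\ silting, and the main obstacle.} By the endomorphism‑algebra computations of this section every complex on the list is tilting except $C_0^*\oplus C_1^*=P_1[1]\oplus\omega_{q,p}(P_1)$ and $\overline C_0^*\oplus\overline C_1^*=P_1\oplus\omega_{p,q}^{-1}(P_1[1])$, for which a nonzero negative‑degree self‑$\Hom$ was exhibited and which instead realize derived equivalences with the graded $(p{+}q)$‑Kronecker algebras $\delta_{p,q},\delta_{q,p}$. Feeding this and the list into the reduction step yields the description of $[\Lambda^{p,q}]$ (using $(\Lambda^{p,q}_0)^{\mathrm{op}}=\Lambda^{q,p}$, the derived equivalences realized by the $\overline C$‑complexes, and $\Lambda^{p,q}\cong\Lambda^{q,p}$ for $p=q$). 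I expect the hard part to be the Hom–Ext comparison of the second paragraph: it is where the relations $\beta_j\alpha_i=0$ and, through the Kronecker side, the arithmetic of the recursion $a_{p,m+1}=pa_{p,m}-a_{p,m-1}$ enter, everything else reducing to bookkeeping and standard facts about Kronecker algebras.
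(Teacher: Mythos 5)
Your route differs from the paper's. The paper never classifies all indecomposable two-term presilting complexes directly: it uses the $g$-vector/arc picture from \cite{AIR} and \cite{DIJ} to see that the arcs of the already-constructed complexes in $\stilt_{p,q}\cup\overline\stilt_{q,p}$ cover the unit circle except two closed sectors, and then kills any hypothetical presilting complex with $g$-vector in those sectors by the single dimension count $\dim_\kk\Hom_{\Cb}(X,X[1])=pab>a^2+b^2-1\ge\dim_\kk(\mbox{null-homotopic maps})$. You instead classify the indecomposable two-term presilting complexes via the identification of radical complexes $P_1^a\to P_2^d$ (resp. $P_2^a\to P_1^d$) with $p$- (resp. $q$-) Kronecker representations and the isomorphism $\Hom_{\Kb_{\Lambda^{p,q}}}(X,X'[1])\cong\mathrm{Ext}^1(V_w,V_{w'})$. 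That comparison is correct (the computation works exactly as you indicate, since $\bB\bA=0$, $e_1\Lambda^{p,q} e_1=\kk$, and radical endomorphisms of $P_2$-columns die against the differential; one should also note that the extra radical endomorphisms of $X$ form a square-zero ideal, so indecomposability transfers), and together with the classification of rigid Kronecker modules and the mixed-pair computation it gives a viable, arguably more self-contained, proof that trades the mutation/fan input of \cite{DIJ} for standard Kronecker representation theory plus the paper's endomorphism-ring lemma.

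There is, however, a concrete gap in your treatment of the pairs involving the stalk complexes $P_1,P_2,P_1[1],P_2[1]$. You dismiss them by the determinant condition alone and assert that only the six listed pairs satisfy it; both points fail. The determinant (unimodularity of the $g$-vectors) is only a necessary condition for silting, and it does not single out your six pairs: for instance $P_1\oplus P_2[1]$, $P_1[1]\oplus P_2$, $P_2[1]\oplus C_1$ (with $g$-vectors $(0,-1)$ and $(-1,p)$) and $P_2\oplus\overline C_1$ (with $g$-vectors $(0,1)$ and $(1,-q)$) all have unimodular pairs of $g$-vectors but are not presilting, e.g. $\Hom_{\Kb_{\Lambda^{p,q}}}(P_1[1],P_2[1])\cong\bA\neq0$ and $\Hom_{\Kb_{\Lambda^{p,q}}}(P_2[1],C_1[1])\cong\Hom_{\Kb_{\Lambda^{p,q}}}(P_2,C_1)\neq0$. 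Note that your mixed-pair argument explicitly excludes exactly these degenerate cases ($a_0=0$ or $d_1=0$), so nothing in the proposal rules them out. The repair is routine — either check $\Hom(-,-[1])$ directly for the finitely many unimodular stalk pairs as above, or treat the stalk complexes as degenerate members of the two shape families (they correspond to the simple Kronecker modules) and apply the rigidity criterion there, supplemented by the degenerate mixed cases — but as written this step of the enumeration is wrong and must be redone before the list $\stilt_{p,q}\cup\overline\stilt_{q,p}$ is complete.
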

\begin{proof} The second assertion follows from the first one and the discussion above. Here we consider the case $p,q\ge 2$. The other cases are easier and can be considered in the same way.

We have also already proved that the set $\stilt_{p,q}\cup\,\overline\stilt_{q,p}$
consists of two-term silting complexes. Moreover, the pairs of $g$-vectors of complexes from $\stilt_{p,q}$ are
\begin{multline*}
\big((1,0),(0,1)\big),\ \big((-a_{p,m-1},a_{p,m}),(-a_{p,m},a_{p,m+1})\big)\,\,(m\ge 0)\mbox{ and }\\
\big((-a_{p,m},a_{p,m-1}),(-a_{p,m+1},a_{p,m})\big)\,\,(m\ge 0),
\end{multline*}
while for the set $\overline\stilt_{q,p}$ we get the pairs
\begin{multline*}
\big((-1,0),(0,-1)\big),\ \big((a_{q,m-1},-a_{q,m}),(a_{q,m},-a_{q,m+1})\big)\,\,(m\ge 0)\mbox{ and }\\
\big((a_{q,m},-a_{q,m-1}),(a_{q,m+1},-a_{q,m})\big)\,\,(m\ge 0).
\end{multline*}
As we said before, the limits of $\frac{a_{p,m+1}}{a_{p,m}}$ and $\frac{a_{q,m+1}}{a_{q,m}}$ are $\frac{p+\sqrt{p^2-4}}{2}$ and $\frac{q+\sqrt{q^2-4}}{2}$ while $m$ tends to infinity. This means that the arcs $L_C$ with $C\in\stilt_{p,q}\cup\,\overline\stilt_{q,p}$ cover the whole unit circle except two arcs; namely the minimal arc cut by the rays $\left(-1,\frac{p+\sqrt{p^2-4}}{2}\right)$ and $\left(-1,\frac{p-\sqrt{p^2-4}}{2}\right)$ and the minimal arc cut by the rays $\left(\frac{q+\sqrt{q^2-4}}{2},-1\right)$ and $\left(\frac{q-\sqrt{q^2-4}}{2},-1\right)$ (see the picture in the end of the paper).

Let us recall that any indecomposable two-term silting complex $X$ such that $O_X\in \bigcup\limits_{C\in \stilt_{p,q}\cup\,\overline\stilt_{q,p}}L_C$ belongs to $\stilt_{p,q}\cup\,\overline\stilt_{q,p}$.
In particular, if there exists some indecomposable two-term silting complex $X$ that does not belong to $\stilt_{p,q}\cup\,\overline\stilt_{q,p}$, then its $g$-vector  belongs to one of the gray sections in the picture below. This means that either $X=(P_1^a\rightarrow P_2^b)$ with $\frac{p-\sqrt{p^2-4}}{2}\le \frac{b}{a} \le \frac{p+\sqrt{p^2-4}}{2}$ or  $X=P_2^a\rightarrow P_1^b$ with $\frac{q-\sqrt{q^2-4}}{2}\le \frac{b}{a} \le \frac{q+\sqrt{q^2-4}}{2}$. One has $a^2+b^2-pab\le 0$ in the first case and $a^2+b^2-qab\le 0$ in the second case.
Let us consider the case where $X=P_1^a\rightarrow P_2^b$ with $pab\ge a^2+b^2$. One has $\dim_\kk\Hom_{\Cb_{\Lambda^{p,q}}}(X,X[1])=pab$ while morphisms from $P_1^a$ to $P_1^a$ and from $P_2^b$ to  $P_2^b$ with components in the Jacobson radical of $\Lambda^{p,q}$ and the identity morphism on $X$ give zero morphisms from $X$ to $X[1]$. Thus, the dimension of the space of null homotopic maps from $X$ to $X[1]$ is not greater than $a^2+b^2-1$. Thus, $X$ can not be silting. The case $X=P_2^a\rightarrow P_1^b$ can be considered in the same manner.
\end{proof}

The picture below illustrates the discussion in this section.
The rays are denoted by indecomposable two-term presilting complexes and correspond to their $g$-vectors.
 Each pair of consecutive rays gives a silting complex and the corresponding derived equivalent algebra is written in the cone bounded by these rays.
Cones corresponding to two-term silting complexes that are not tilting are  marked by $\delta_{p,q}$ and $\delta_{q,p}$. 
\begin{center}
\begin{tikzpicture}[scale=0.4]
\clip (-15,-15) rectangle (15,15);
\draw (-20,0) -- (20,0);
\draw (0,-20) -- (0,20);
\draw (0,0) -- (-20,6);
\draw (0,0) -- (-30,14);
\draw (0,0) --  (-80, 50);
\draw (0,0) -- (-6,20);
\draw (0,0) -- (-14,30);
\draw (0,0) --  (-50,80);
\fill[gray] (0,0) -- (-26,20) -- (-20,26) -- (0,0);
\draw (0,0) -- (40,-9);
\draw (0,0) --  (160, -70);
\draw (0,0) -- (9,-40);
\draw (0,0) --  (70, -160);
\fill[gray] (0,0) -- (37,-22) -- (22,-37) -- (0,0);
\coordinate [label=left:$\Lambda^{p,q}$] () at (-8,-8);
\coordinate [label=left:$\Lambda^{p,q}$] () at (8,8);
\coordinate [label=above:$P_1$] () at (14.5,0);
\coordinate [label=above:$\overline C_1^*$] () at (14.5,-3.5);
\coordinate [label=above:$\overline C_2^*$] () at (14.5,-6.6);
\coordinate [label=below:$P_1[1{]}$] () at (-14,0);
\coordinate [label=below:$C_1^*$] () at (-14.4,4.4);
\coordinate [label=below:$C_2^*$] () at (-14.4,6.7);
\coordinate [label=below:$C_3^*$] () at (-14.4,9);
\coordinate [label=right:$P_2$] () at (0,14.5);
\coordinate [label=right:$C_1$] () at (-4.5,14.5);
\coordinate [label=right:$C_2$] () at (-6.9,14.5);
\coordinate [label=right:$C_3$] () at (-9.2,14.5);
\coordinate [label=left:$P_2[1{]}$] () at (0,-14.5);
\coordinate [label=left:$\overline C_1$] () at (3.5,-14.5);
\coordinate [label=left:$\overline C_2$] () at (6.6,-14.5);
\coordinate [label=above:$\scriptscriptstyle\delta_{p,q}$] () at (-12.5,1.4);
\coordinate [label=above:$\scriptscriptstyle\Lambda^{q,p}$] () at (-12,4);
\coordinate [label=above:$\scriptscriptstyle(\Lambda^{p,q}_1)^{\rm op}$] () at (-11.5,5.5);
\coordinate [label=left:$\scriptscriptstyle\Lambda^{p,q}_1$] () at (-1,12.5);
\coordinate [label=left:$\scriptscriptstyle\Lambda^{p,q}_2$] () at (-3.6,12);
\coordinate [label=left:$\scriptscriptstyle \Lambda^{p,q}_3$] () at (-5.3,11.5);
\coordinate [label=right:$\scriptscriptstyle(\Lambda^{q,p}_1)^{\rm op}$] () at (-0.2,-12.5);
\coordinate [label=right:$\scriptscriptstyle(\Lambda^{q,p}_2)^{\rm op}$] () at (2.4,-12);
\coordinate [label=above:$\scriptscriptstyle \delta_{q,p}$] () at (12.5,-2);
\coordinate [label=above:$\scriptscriptstyle \Lambda^{q,p}$] () at (12,-4.4);
\draw[dotted, thick, ->] (3.3,-7.3) arc (-70: -62: 5.4);
\draw[dotted, thick, ->] (7.3,-3.3) arc (-20: -28: 5.4);
\draw[dotted, thick, ->] (-4.6,7.3) arc (115: 122: 5.4);
\draw[dotted, thick, ->] (-7.3,4.6) arc (155: 148: 5.4);
\end{tikzpicture}
\end{center}

{\bf Acknowledgements.} The author is grateful to Alexandra Zvonareva for productive discussions and helpful advises. The work was supported by  RFBR according to the research project 18-31-20004 and by the President's Program ``Support of Young Russian Scientists'' (project number MK-2262.2019.1).

\end{document}